\documentclass[11pt,a4paper]{amsart}
\usepackage[T1]{fontenc}
\usepackage[utf8]{inputenc}
\usepackage{lmodern}
\usepackage{amsmath,amssymb,amsthm,mathrsfs}
\usepackage{geometry}
\usepackage{microtype}
\usepackage{enumitem,needspace}
\usepackage{cite}
\usepackage[hidelinks]{hyperref}
\hypersetup{pdftitle={Mixed sums of infinite multiple zeta-star values with restricted indices},
  pdfsubject={Revised manuscript}}
\allowdisplaybreaks[1]
\numberwithin{equation}{section}
\theoremstyle{plain}
\newtheorem{Thm}{Theorem}[section]
\newtheorem{lem}[Thm]{Lemma}
\newtheorem{prop}[Thm]{Proposition}

\newtheorem{rem}[Thm]{Remark}

\DeclareMathOperator{\conv}{conv}
\DeclareMathOperator{\diam}{diam}
\DeclareMathOperator{\maxgap}{maxgap}

\makeatletter
\renewenvironment{proof}[1][\proofname]{%
  \par
  \pushQED{\qed}%
  \normalfont
  \topsep6\p@\@plus6\p@\relax
  \trivlist
  \item[\hskip\labelsep\bfseries #1\@addpunct{:}]%
  \ignorespaces
}{%
  \popQED
  \endtrivlist
  \@endpefalse
}
\makeatother
\begin{document}
\title{Mixed sums of infinite multiple zeta-star values with restricted indices}

\author{Jiangtao Li}

\email{lijiangtao@csu.edu.cn}
\address{Jiangtao Li \\ School of Mathematics and Statistics, HNP-LAMA, Central South University, Hunan Province, China}

\author{Siyu Yang*}

\email{siyuyang@csu.edu.cn}
\address{Siyu Yang \\ School of Mathematics and Statistics, HNP-LAMA, Central South University, Hunan Province, China  }

\begin{abstract}
We study arithmetic sums of infinite multiple zeta-star values subject to
upper or lower bounds on their indices. Let $\eta(\mathcal D_q)$ be the image, under the
zeta-star correspondence, of the admissible infinite index sequences with all indices at most $q$,
and let $\overline{\eta(\mathcal T_p)}$ be the closure of the image of the sequences with all indices
at least $p$. We prove that $\eta(\mathcal D_q)+\overline{\eta(\mathcal T_p)}$ is an interval exactly when $p\leq q$;
in that case it is $[\zeta^\star(\{q\}^{\infty})+1,\infty)$.
We also show that sums of two upper-bounded sets are half-lines, even with
different bounds, whereas sums of two lower-bounded sets are never intervals.
The proofs combine a uniform product estimate with natural gap
constructions. Lateral thickness handles the diagonal case $p=q\geq3$,
while a non-strict local bridge--gap criterion treats $p=q=2$ directly.
Compact truncations and
explicit omitted intervals yield the remaining assertions.
\end{abstract}

\makeatletter
\begingroup
\renewcommand{\@makefntext}[1]{\noindent #1}
\let\thefootnote\relax
\footnotetext{%
2020 $\mathnormal{Mathematics}\;\mathnormal{Subject}\;\mathnormal{Classification}$. 11M32, 11B13, 28A80.\\
$\mathnormal{Keywords:}$ Infinite multiple zeta-star values, arithmetic sumsets, interval property, Cantor sets.\\
*Corresponding author%
}
\endgroup
\makeatother

\maketitle

\section{Introduction}\label{sec:intro}

For integers $r\geq1$, $k_1\geq2$, and $k_2,\ldots,k_r\geq1$, the multiple
zeta value and the multiple zeta-star value are, respectively,
\[
\zeta(k_1,\ldots,k_r)
 =\sum_{n_1>\cdots>n_r\geq1}\frac1{n_1^{k_1}\cdots n_r^{k_r}},
\qquad
\zeta^\star(k_1,\ldots,k_r)
 =\sum_{n_1\geq\cdots\geq n_r\geq1}\frac1{n_1^{k_1}\cdots n_r^{k_r}}.
\]
Besides their algebraic relations, these values have order-theoretic and
topological structures. Kumar~\cite{kumar} determined the iterated derived
sets and the order type of the set of multiple zeta values. Li and
Pan~\cite{lipan} studied related topological questions for certain
$q$-analogues.

Denote by $\mathbb{Z}^+$ the set of positive integers. For multiple zeta-star values, the first author \cite{lit} established a
natural order structure for multiple zeta-star
values and a bijection $\eta$  from a suitable space of infinite index sequences to $(1,+\infty)$.  It is 
\[
\eta:\mathcal T\rightarrow(1,+\infty),
\]
\[
(k_1,k_2,\cdots)\mapsto \lim_{r\rightarrow +\infty}  \zeta^\star(k_1,\cdots,k_r),
\]
where
\[
\mathcal T=\Big\{(k_1,k_2,\ldots)\in(\mathbb{Z}^+)^{\infty}\;\Big|\; k_1\geq2,(k_1, k_2,\cdots)\neq (2,\{1\}^{\infty})\Big\}.
\]
We also write $\eta(k_1,k_2,\ldots)=\zeta^\star(k_1,k_2,\ldots)$.
Thus every real number greater than $1$ has a {\bf unique representation} of
this form, and restrictions on the indices define subsets of $(1,\infty)$.

Independently, Hirose, Murahara, and Onozuka~\cite{hmo} developed
multiple zeta-star values of infinite length, obtained the corresponding
parametrization, and studied explicit derivative formulas. Further developments by the first author include rational deformations and their relation to Cantor
sets \cite{lir}, Diophantine approximation of multiple zeta-star
values \cite{lida}, and a generalization to the polylogarithm star
correspondence \cite{lipoly}.

For integers $p,q\geq2$, set
\[
\begin{aligned}
\mathcal D_q&=\{(k_j)\in\mathcal T:k_j\leq q\text{ for every }j\},
& D_q&:=\eta(\mathcal D_q),\\
\mathcal T_p&=\{(k_j)\in\mathcal T:k_j\geq p\text{ for every }j\},
& T_p&:=\overline{\eta(\mathcal T_p)}.
\end{aligned}
\]

In our previous work \cite{LY}, we studied arithmetic sums and products of these
restricted sets. In particular, we proved
\[
\eta(\mathcal D_q)+\eta(\mathcal D_q) = D_q+D_q=[2 \zeta^\star(\{q\}^{\infty}) ,\infty)\qquad(q\geq2),
\]
and showed that $\overline{\eta(\mathcal T_p)}+\overline{\eta(\mathcal T_p)} = T_p+T_p $ is not an interval for any $p\geq2$.
These results suggest three mixed problems: different upper bounds,
different lower bounds, and an upper bound paired with a lower bound.
These problems are the subject of this paper. 

The similarity with continued fractions is useful here. An irrational real
number has a continued fraction expansion
\[
 x=[a_0;a_1,a_2,\ldots]
 =a_0+\cfrac1{a_1+\cfrac1{a_2+\cfrac1{\ddots}}},
 \qquad a_0\in\mathbb Z,\quad a_j\in\mathbb Z^+\ (j\geq1),
\]
while rational numbers have finite expansions. Restrictions on the partial
quotients produce sets with a natural Cantor structure. Let $F(m)$ denote
the set for which $a_j\leq m$ for all $j\geq1$, with arbitrary integer part.
Using direct comparisons between deleted gaps and retained intervals,
Hall~\cite{hall} proved in 1947 that
\[
 F(4)+F(4)=\mathbb R,
 \qquad F(4)\cdot F(4)\supset[1,+\infty).
\]
For the opposite restriction, let $G(2)$ denote the set with $a_j\geq2, j\geq 1$.
Cusick~\cite{cusick} proved in 1971 that
\[
 \overline{G(2)}+\overline{G(2)}=\mathbb R,
 \qquad
 \overline{G(2)}\cdot\overline{G(2)}\supset[1,+\infty).
\]
Hlavka~\cite{hlavka} considered different upper bounds and established,
among other results, $F(3)+F(4)=\mathbb R$ and $F(2)+F(7)=\mathbb R$ in 1975.

A general geometric approach arose from Newhouse's work on homoclinic
bifurcations. Newhouse~\cite{new} introduced thickness and proved the Gap
Lemma in 1979. Thickness compares bridges with adjacent gaps. For compact
Cantor sets $C_1,C_2$ whose convex hulls meet, and neither of which is
contained in a bounded gap of the other, the strict inequality
\[
 t(C_1)t(C_2)>1
\]
forces $C_1\cap C_2\ne\varnothing$. The relation
\[
 x\in C_1+C_2\quad\Longleftrightarrow\quad
 C_1\cap(x-C_2)\ne\varnothing
\]
then turns intersection criteria into results about arithmetic sums.
Astels~\cite{astels} developed thickness measures for finite sums of
Cantor sets in 1999. The associated interval criterion also includes the
boundary case $t(C_1)t(C_2)=1$. His subsequent work~\cite{astelscf}
applied these ideas to continued fractions with restricted partial
quotients, extending the line of Hall and Hlavka.

Several refinements are relevant to this perspective. Moreira~\cite{moreira}
used gap presentations to distinguish left and right thicknesses in 1996,
obtaining an asymmetric Gap Lemma that retains information lost by ordinary
thickness. 

Palis~\cite{palis} conjectured that, generically, the arithmetic
difference of two regular Cantor sets either has zero Lebesgue measure or
contains an interval. 
Moreira and Yoccoz~\cite{moreirayoccoz}, with the corrected proof of the scale recurrence lemma given in~\cite{moreirazamudio}, proved this conjecture in the generic setting in 2001. In particular, when the sum of the dimensions is greater than $1$, their result gives stable intersections after suitable translations for generic pairs of regular Cantor sets. 

Cabrelli, Hare, and Molter~\cite{chm} developed a local criterion
in 2002 that compares bridge--gap pairs from the two sets directly, making
it useful when local lengths are explicitly available. 

More recently,
Takahashi~\cite{takahashi} proved in 2019 that two homogeneous Cantor sets
whose Hausdorff dimensions sum to more than $1$ admit arbitrarily small
perturbations for which their sum contains an interval. Jiang \cite{jiang} further developed thickness criteria for nonlinear images of Cantor sets. These results place the present questions within the wider study of intervals in sums and differences of Cantor sets.

The problem considered in this paper is to determine, for all integers
$p,q\geq2$, whether the mixed arithmetic sum of 
$\eta(\mathcal D_q)$ and $\overline{\eta(\mathcal T_p)}$
is an interval and, if so, determine it explicitly. Our
results give a complete classification according to the relative positions of
$p$ and $q$.

The two sets in this mixed sum have different geometric features:
$\eta(\mathcal D_q)$ is closed and unbounded, whereas
$\overline{\eta(\mathcal T_p)}$ is compact. This asymmetry makes the mixed
problem different from the self-sum problems in \cite{LY} and requires different Cantor set methods.

\begin{Thm}\label{thm:main-positive}
For integers $2\leq p\leq q$,
\[
\eta(\mathcal D_q)+\overline{\eta(\mathcal T_p)}
=\left[\zeta^\star(\{q\}^{\infty})+1,+\infty\right).
\]
\end{Thm}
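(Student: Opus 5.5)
The plan is to combine a direct determination of the extreme points with a covering argument by Cantor-set intersection criteria. First I would record the elementary order facts coming from the bijection $\eta$ of \cite{lit}. The value $\zeta^\star(k_1,k_2,\ldots)$ decreases when an index is increased, so the minimum of $\eta(\mathcal D_q)$ is $\zeta^\star(\{q\}^\infty)$, attained at the constant sequence. For $\overline{\eta(\mathcal T_p)}$, letting $k_1\to\infty$ shows $\inf\eta(\mathcal T_p)=1$, so $1\in T_p$. The constant sequence gives $\max T_p=\zeta^\star(\{p\}^\infty)$. Hence $T_p\subset[1,\zeta^\star(\{p\}^\infty)]$, and the sum is contained in $[\zeta^\star(\{q\}^\infty)+1,\infty)$. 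The lower endpoint is attained because $D_q$ is closed and $T_p$ is compact. Since $D_q$ is unbounded (take first indices $2,1,\ldots,1$ with many ones), the whole problem reduces to showing that every $x\geq \zeta^\star(\{q\}^\infty)+1$ lies in $D_q+T_p$.

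For the covering I would exhaust $D_q$ by compact pieces $C_w=\eta(\{(k_j)\in\mathcal D_q: (k_1,\ldots,k_r)=w\})$, indexed by admissible finite prefixes $w$. Each $C_w$ is a Cantor set whose gap structure is generated by the same branching in $\{1,\ldots,q\}$ at each level. Consecutive pieces in the natural order of \cite{lit} are separated by gaps $G$ in $D_q$. The sum $D_q+T_p$ then contains the interval $\conv(C_w)+[1,\zeta^\star(\{p\}^\infty)]$ provided two conditions hold. First, each $C_w+T_p$ is an interval. Second, every gap $G$ of $D_q$ between adjacent pieces satisfies $|G|\leq\diam T_p=\zeta^\star(\{p\}^\infty)-1$. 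This second comparison is where $p\leq q$ enters. The largest gaps of $D_q$ arise when the tail $\{q\}^\infty$ jumps to the next index value, and I expect $|G|$ to be dominated by quantities like $\zeta^\star(\{q\}^\infty)$-type differences, which are at most $\zeta^\star(\{p\}^\infty)-1$ exactly when $q\geq p$. I would prove this using the uniform product estimate for ratios of tails of zeta-star values. The estimate should take the form $\zeta^\star(w,\mathbf k)-\zeta^\star(w,\mathbf k')\asymp \Pi(w)\,(\eta(\mathbf k)-\eta(\mathbf k'))$, with a multiplicative factor $\Pi(w)$ depending only on the prefix. This self-similarity makes both the gap comparison and the thickness computation reduce to finitely many model configurations.

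For the statement that each $C_w+T_p$ is an interval, I would use Newhouse/Astels thickness. By the product estimate, $t(C_w)$ is bounded below by the thickness of the model set $D_q$ near its top level. For $p<q$, one should have $t(D_q)\,t(T_p)\geq 1$ with room to spare, and the Gap Lemma then applies. The size condition (neither set lies in a gap of the other) is exactly the gap comparison above.

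The main obstacle is the diagonal case $p=q$, where ordinary thickness is borderline. For $p=q\geq3$ I would switch to Moreira's lateral thicknesses. These exploit the asymmetry that the gaps of $D_q$ are large on one side, where the bound $q$ is hit, and small on the other, whereas $T_p$ has the reverse asymmetry. The products of opposite lateral thicknesses should then be at least $1$. For $p=q=2$ even this may fail, so I would verify the Cabrelli--Hare--Molter local bridge--gap criterion in its non-strict form. This means comparing each gap of $T_2$ directly with the adjacent bridges of $C_w$ and conversely, using the explicit lengths computed from the product estimate. I expect this direct case to be the most computation-heavy step.
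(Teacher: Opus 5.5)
Your reduction to the lower endpoint is fine, but the covering step has a genuine gap. Compact-versus-compact intersection criteria (Newhouse--Astels, Moreira, Cabrelli--Hare--Molter) require two size conditions, and you verify only one.

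The bound $|G|<A_q-1\leq A_p-1$ on gaps of $D_q$ shows that $x-T_p$ cannot lie in a gap of $C_w$. It does not prevent $C_w$ from lying in a gap of $x-T_p$; excluding that needs $\diam(C_w)$ to be at least of the order of $\maxgap(T_p)$. So your sentence ``the size condition is exactly the gap comparison above'' is wrong, and for $p<q$ the claim that each $C_w+T_p$ is an interval is false.

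Any piece $C_w$ containing $\min D_q=A_q$ lies in $C_{(q)}\subset[A_q,\zeta(q-1)]$. Take $p=2$, $q=4$. Then $\zeta(3)-A_4\approx0.114<2-\zeta(2)=|\mathcal G_0|$. Since $T_2\subset[1,2\zeta(2)-2]\cup[\zeta(2),2]$, the set $C_{(4)}+T_2$ misses the nonempty interval $(\zeta(3)+2\zeta(2)-2,\;A_4+\zeta(2))$, while it contains $A_4+1$ and $\zeta(3)+2$. No choice of prefix level avoids this, and a good thickness product cannot compensate for a failed diameter hypothesis. Even for $p=q$ you would have to check this reverse size condition piece by piece.

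The paper sidesteps these issues in two ways.
\begin{itemize}
\item For $p<q$ no estimate is needed at all: $T_q\subseteq T_p$ gives $[A_q+1,\infty)=D_q+T_q\subseteq D_q+T_p\subseteq[A_q+1,\infty)$, so only the diagonal case matters.
\item In the diagonal case, instead of cutting $D_q$ into compact pieces, it proves a half-line gap lemma (Proposition~\ref{prop:half-line}) for the unbounded set $E=D_q$ against $K_x=x-T_q$. The only size condition is that $K_x$ not lie in a bounded gap of $D_q$, which is exactly your gap bound. The reverse condition is automatic because $D_q$ is unbounded.
\end{itemize}
Nested truncations $D_q\cap[A_q,h_R]$ of diameter $>1$, as in Lemma~\ref{lem:compact-truncation}, would also repair your scheme.

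Separately, a comparability $\asymp\Pi(w)$ with unspecified constants is too weak here, because the margins are thin. The left retained intervals of $D_2$ equal their gaps, and $2c_3\beta_3=204/182$. What makes the estimates work is that each of $M$, $G$, $N$ has the form $\sum_m\Lambda_P(m)m^{-i}\phi(m)$ with $\phi$ independent of $P$. Pointwise coefficient inequalities such as Lemma~\ref{lem:product} then give exact, prefix-uniform ratio bounds. The construction is not self-similar, so nothing reduces to finitely many model configurations. Finally, the lateral criterion needs strict products $>1$, not $\geq1$.
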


\begin{Thm}\label{thm:main-negative}
For integers $p,q\geq2$ with $p\geq q+1$, the set 
\[
\eta(\mathcal D_q)+\overline{\eta(\mathcal T_p)}
\] is not an interval.
\end{Thm}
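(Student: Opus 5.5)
The plan is to exhibit one explicit point $x_0$ strictly between two points of the sumset with $x_0\notin \eta(\mathcal D_q)+\overline{\eta(\mathcal T_p)}$. I will use the decomposition
\[
\eta(\mathcal D_q)+\overline{\eta(\mathcal T_p)}=\bigcup_{d\in \eta(\mathcal D_q)}\bigl(d+\overline{\eta(\mathcal T_p)}\bigr).
\]
I will also use the monotonicity properties of $\eta$ from \cite{lit}: increasing the first index decreases the value, and, given the first index, the order in the next index is governed by the same rule. From these I first record the extremal values: $\min\eta(\mathcal D_q)=\zeta^\star(\{q\}^\infty)=:m_q$, and $\overline{\eta(\mathcal T_p)}\subset[1,M_p]$ with $M_p:=\zeta^\star(\{p\}^\infty)$. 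Here $1$ is attained only as a limit, since sending $k_1\to\infty$ sends the value to $1$. Thus the sumset lies in $[m_q+1,\infty)$, and its bottom part is produced only by $d$ near $m_q$ and $t$ near $1$.

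The key step is to find a bounded gap $(a,b)$ of $\eta(\mathcal D_q)$, with $a,b\in\eta(\mathcal D_q)$, such that
\[
b-a> M_p-1=\diam\conv\overline{\eta(\mathcal T_p)}.
\]
Given this, every translate $d+\overline{\eta(\mathcal T_p)}$ with $d\le a$ lies in $(-\infty,a+M_p]$, and every translate with $d\ge b$ lies in $[b+1,\infty)$. Hence the interval $(a+M_p,\,b+1)$ misses the sumset, while $a+M_p$ and $b+1$ both belong to it, so the sumset is not an interval. I would look for the gap among cylinders of $\mathcal D_q$ near the minimum, namely between the set of values of sequences beginning with a prefix $(\{q\}^{j},q)$ and those beginning with $(\{q\}^{j},q-1)$. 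Its endpoints are explicit values such as $\zeta^\star(\{q\}^{j},q,\{1\}^\infty)$-type extrema and $\zeta^\star(\{q\}^{j},q-1,\{q\}^\infty)$, where the extrema have to be computed with the correct alternating order and the admissibility condition at the tail. I would bound their difference from below with the uniform product estimate used in the rest of the paper.

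If a single gap of $\eta(\mathcal D_q)$ is not longer than the whole hull of $\overline{\eta(\mathcal T_p)}$, I fall back to a two-sided version. I split $\overline{\eta(\mathcal T_p)}$ at its first-level gap into $T'\subset[1,c]$ (first index $\ge p+1$) and $T''\subset[e,M_p]$ (first index $p$). Here $c\le\zeta^\star(p+1,\{p\}^\infty)$ and $e\ge\lim_{k\to\infty}\zeta^\star(p,k,\dots)$. I then choose $x_0$ so that $x_0-T''$ lies in one gap of $\eta(\mathcal D_q)$ and $x_0-T'$ lies in a neighbouring gap, or below $m_q$. This is exactly where $p\ge q+1$ enters: $\overline{\eta(\mathcal T_p)}$ sits inside $[1,M_p]$ with $M_p<\zeta^\star(\{q\}^\infty)$-scale quantities, so its pieces are short compared with the gaps of $\eta(\mathcal D_q)$ near $m_q$.

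The main obstacle is the quantitative comparison $b-a>M_p-1$, or its two-piece analogue, uniformly in all $p\ge q+1$ and $q\ge2$. The case $q=2$, $p=3$ is the tightest, and there $m_2=2$. I would handle it as follows. First, reduce to $p=q+1$, since $\overline{\eta(\mathcal T_{p})}$ shrinks as $p$ grows, so a gap that works for $p=q+1$ works for all larger $p$. Second, obtain a lower bound for the gap length that is uniform in $q$ from the product estimate. Third, settle the finitely many small $q$ remaining by explicit numerical evaluation of the endpoint values.
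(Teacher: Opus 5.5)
Your reduction matches the paper's: a bounded gap $(a,b)$ of $D_q$ with $b-a>A_p-1=\diam(T_p)$ gives the omitted interval $(a+A_p,\,b+1)$, both of whose endpoints lie in the sum, and $A_p\le A_{q+1}$ reduces everything to $p=q+1$. The argument breaks at the choice of gap. The gap between the cylinders $(\{q\}^j,q)$ and $(\{q\}^j,q-1)$ is $\mathcal G_{D,q}(\{q\}^j,q-1)$, of length $\sum_{m\ge2}\Lambda_{\{q\}^j}(m)\,m^{-(q-1)}\bigl(F_m(q)-1\bigr)$. Each appended $q$ multiplies the coefficients $\Lambda(m)$, $m\ge2$, by at most $\zeta(q)-1<1$, so $j=0$ is the best case. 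Its length is then less than $(A_q-1)(\zeta(q-1)-1)=O(4^{-q})$, whereas $A_{q+1}-1>2^{-(q+1)}$. For $q=4$ the length is about $0.015$, against $A_5-1\approx0.038$, and the ratio to $A_{q+1}-1$ tends to $0$. So your family works only for $q=2$ and $q=3$. For $q=2$ with $j=1$ it is exactly $(U_0,V_{0,2})$, of length $2\zeta(3)-2$. For $q=3$ the length is about $0.12>A_4-1\approx0.088$. It fails for every $q\ge4$, so the uniform-in-$q$ lower bound you plan cannot exist. Consequently $q=2$ is not the tightest case, and $T_p$ is not short compared with the gaps of your family near $A_q$. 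The fallback does not repair this. Placing $x_0-T'$ below $m_q$ only gives $x_0<A_q+1$, which is trivially outside the sum and says nothing about the interval property. The two-gap option still has to fit $x_0-T'$ into a gap of your family, yet for $p=q+1$ its diameter is $B_p-1>2^{-(q+2)}$, far larger than $O(4^{-q})$ once $q$ is large.

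The missing idea is that the long gaps of $D_q$ lie far from its minimum, where the prefix uses the smallest admissible indices. The paper takes $P_r=(2,\{1\}^r)$ with split index $1$. This gives the gap $(U_r,V_{r,q})=\bigl(h_{r+1},\eta(2,\{1\}^{r+1},\{q\}^{\infty})\bigr)$ between the cylinders $(P_r,2)$ and $(P_r,1)$. Since $F_m(q)-1\ge F_2(q)-1=1/(2^q-1)$ for $m\ge2$, one gets $\Delta_{r,q}\ge(h_{r+1}-h_r)/(2^q-1)$, and $h_{r+1}-h_r=(r+2)\zeta(r+3)-(r+1)\zeta(r+2)\to1$. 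Lemma~\ref{lem:product} at $s=q+1$ gives $A_{q+1}-1<1/c_{q+1}=1/(2^q-1)$. Hence, for $r$ large, $\Delta_{r,q}>A_{q+1}-1\ge A_p-1$, uniformly in $q$ and without numerics. With $(a,b)=(U_r,V_{r,q})$, the rest of your argument goes through unchanged.
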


Thus $\eta(\mathcal D_q)+\overline{\eta(\mathcal T_p)}$ is an interval if and only if $p\leq q$. The remaining two
results extend the self-sum theorems of~\cite{LY} to different bounds.

\begin{Thm}\label{thm:mixed-upper}
For all integers $q_1,q_2\geq2$,
\[
 \eta(\mathcal D_{q_1})+\eta(\mathcal D_{q_2})
 =
 \left[\zeta^\star(\{q_1\}^{\infty})
 +\zeta^\star(\{q_2\}^{\infty}),+\infty\right).
\]
\end{Thm}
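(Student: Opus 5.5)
Without loss of generality take $q_1\le q_2$. The case $q_1=q_2$ is the self-sum theorem of~\cite{LY}, so assume $q_1<q_2$. Put $m_i=\zeta^\star(\{q_i\}^{\infty})$.

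\emph{Easy inclusion.} Because $\eta$ is order-preserving in the natural order of~\cite{lit}, $m_q=\min D_q$. The value is attained at the constant sequence. Hence $D_{q_1}+D_{q_2}\subset[m_1+m_2,\infty)$.

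\emph{Reduction.} The key observation is $D_{q_1}\subset D_{q_2}$, since $\mathcal D_{q_1}\subset\mathcal D_{q_2}$. Two tempting shortcuts fail:
\begin{itemize}
\item $D_{q_1}+D_{q_2}\supset D_{q_1}+D_{q_1}=[2m_1,\infty)$ only covers the range from $2m_1$, and $2m_1>m_1+m_2$.
\item $D_{q_2}+D_{q_2}$ is not contained in the mixed sum.
\end{itemize}
So the real work is on the window $[m_1+m_2,\,2m_1]$, and there the plan is a gap-lemma argument. Write $D_q$ as a closed unbounded set. Its bounded part is $D_q\cap[m_q,M]$, a compact Cantor-like set. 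Its gaps come from the natural construction: at each level, the cylinder $\{k_1=a_1,\dots,k_r=a_r\}$ splits into the subcylinders with next index $1,\dots,q$. These subcylinders are consecutive in the order, separated by gaps that correspond to the excluded next indices $>q$ (the tail limits).

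\emph{Thickness estimate and Newhouse criterion.} Using the uniform product estimate, compute a lower bound for the thickness of $D_{q}$, or more precisely for its left and right thicknesses in Moreira's sense. The bound should increase in $q$: larger $q$ means more children per cylinder and relatively smaller gaps. Then verify Astels' non-strict criterion $\tau(D_{q_1})\tau(D_{q_2})\ge1$. This should follow from $\tau(D_{q_1})^2\ge1$, which the proof in~\cite{LY} of $D_{q_1}+D_{q_1}=[2m_1,\infty)$ essentially gives, together with monotonicity $\tau(D_{q_2})\ge\tau(D_{q_1})$. Once the criterion holds, apply the Gap Lemma to $C_1=D_{q_1}\cap[m_1,M]$ and $C_2=x-(D_{q_2}\cap[m_2,M'])$ for a target $x$. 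For this we must check two things:
\begin{itemize}
\item the convex hulls meet;
\item neither set lies in a bounded gap of the other.
\end{itemize}
The first needs $x\ge m_1+m_2$ and a suitable choice of truncations $M,M'$. The second is handled by comparing the largest gap lengths with the hull lengths near the endpoints; near $x=m_1+m_2$ both sets begin at their minima. For large $x$, either use the unboundedness of $D_q$, since the first index $k_1$ ranges over $2,\dots,q$ and larger values come from $k_1$ with many tails, or simply invoke $x\ge 2m_1$ and the self-sum result.

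\emph{Main obstacle.} The hardest step is obtaining a thickness bound that is valid uniformly, including at the extreme gaps near the minimum $m_q$ (sequences close to $\{q\}^\infty$). There, local ratios may degenerate, as in the $p=q=2$ diagonal issue mentioned in the abstract. If ordinary thickness fails for $q_1=2$, I would instead use a local bridge--gap criterion of Cabrelli--Hare--Molter type. It compares each gap of $D_{2}$ with the bridges of $D_{q_2}$ at comparable scale via the explicit lengths given by the product estimate. Since $D_2\subset D_{q_2}$, the bridges of $D_{q_2}$ are at least as large as those available in the $D_2+D_2$ case already handled in~\cite{LY}.
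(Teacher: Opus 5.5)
\textbf{There is a genuine gap.} Your strategy (compact truncations, a thickness lower bound, then the Newhouse--Astels criterion) is the one the paper uses. However, the step that carries all the content is not carried out, and the justification you give for it does not work. You never prove a thickness bound. Instead you appeal to the earlier self-sum work for $t(D_{q_1})\geq1$ and to a monotonicity $t(D_{q_2})\geq t(D_{q_1})$.

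The inclusion $D_{q_1}\subset D_{q_2}$ gives no such monotonicity. Enlarging a set replaces its gap family by a different one, and the new, smaller gaps can have arbitrarily poor bridge ratios. The same objection applies to your fallback claim that the bridges of $D_{q_2}$ dominate those of $D_2$.

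What is needed is an explicit, uniform, $q$-dependent bound, and the product estimate gives it directly:
\begin{itemize}
\item By Lemma~\ref{lem:product}, $F_m(q)\leq (c_q+1)m/(c_qm+1)$. This is equivalent to $c_q(F_m(q)-1)\leq 1-F_m(q)/m\leq 1-m^{i-q}F_m(q)$, and it also yields $m-F_m(q)\geq 2c_q(F_m(q)-1)$.
\item Summing against the positive weights $\Lambda_P(m)m^{-i}$ in \eqref{eq:E-M}--\eqref{eq:E-N} gives $M_{D,q}\geq c_qG_{D,q}$ and $N_{D,q}\geq 2c_qG_{D,q}$ at every natural subdivision (Lemma~\ref{lem:E-estimates}).
\item The bridge-comparison argument then upgrades this to $|B_\sigma^{D_q}(G)|\geq c_q|G|$.
\end{itemize}
There is no degeneration near $A_q$. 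For $q=2$ the bound is exactly $c_2=1$, with $M_{D,2}=G_{D,2}$, so ordinary thickness does not fail. Hence $c_{q_1}c_{q_2}\geq1$, and in fact $\geq c_2c_3=3>1$ when $q_1<q_2$, so even the strict Gap Lemma suffices in your window argument. The Cabrelli--Hare--Molter fallback is unnecessary.

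\textbf{The truncation point also has to be chosen.} You cannot cut $D_q$ at an arbitrary $M$:
\begin{itemize}
\item If $M$ is the right endpoint of a gap, then $M$ is an isolated point of $D_q\cap[A_q,M]$, so this is not a Cantor set.
\item If $M$ is a point of $D_q$ just to the right of such an endpoint $u_+$, the right bridge $[u_+,M]$ of that gap is arbitrarily short, and the thickness bound is lost.
\end{itemize}
The paper cuts at $b_R=h_R=\zeta^\star(2,\{1\}^R)$, the left endpoint of the spine gap $H_R=(h_R,V_{R-1,q})$. Using Lemma~\ref{lem:prefix} and \eqref{eq:increasing-gaps}, it checks that $H_R$ is strictly longer than every gap of $D_q$ in $[A_q,b_R]$. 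Hence no Newhouse bridge crosses $b_R$, and $t(K_R^{(q)})\geq c_q$ survives the truncation (Lemma~\ref{lem:compact-truncation}).

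With $\maxgap(K_R^{(s)})<A_s-1\leq1<b_R-A_s=\diam(K_R^{(s)})$ for $R\geq2$, Theorem~\ref{thm:newhouse} gives $K_R^{(q_1)}+K_R^{(q_2)}=[A_{q_1}+A_{q_2},2b_R]$ for all $q_1,q_2$ at once. Your reduction to the window $[m_1+m_2,2m_1]$ and your deferral of the case $q_1=q_2$ to the earlier work are therefore harmless but not needed.
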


\begin{Thm}\label{thm:mixed-lower}
For all integers $p_1,p_2\geq2$, the set 
\[
\overline{\eta(\mathcal T_{p_1})}
+
\overline{\eta(\mathcal T_{p_2})}
\]
 is not an interval.
\end{Thm}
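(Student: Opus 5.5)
The statement to prove is Theorem~\ref{thm:mixed-lower}. Write $K_i=\overline{\eta(\mathcal T_{p_i})}$. The plan is to exhibit an explicit point between $\min(K_1+K_2)$ and $\max(K_1+K_2)$ that the sum omits.

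By symmetry assume $p_1\le p_2$. First I would describe the two compact sets. Since $\eta$ is monotone in the first index, $K_i\subset[1,\max K_i]$ with $1\in K_i$ (as $k_1\to\infty$). The maximum is $M_i=\zeta^\star(\{p_i\}^\infty)$. The first-level structure comes from the natural order of \cite{lit}: for fixed $k_1=k\ge p_i$, the values lie in a cylinder interval $I_k^{(i)}=[a_k,b_k]$, where $b_k=\zeta^\star(k,\{p_i\}^\infty)$ and $a_k=\zeta^\star(k+1,\{p_i\}^\infty)$ is the limit of the tails. Consecutive cylinders for $k$ and $k+1$ touch only through limit points, but the tail restriction $k_j\ge p_i$ removes genuine gaps inside each cylinder.

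The simplest usable gap is the top one. Inside $I^{(i)}_{p_i}$, the points with $k_2=p_i$ form the top subcylinder, and the points with $k_2\ge p_i+1$ lie below it. This leaves a gap $G_i=(g_i^-,g_i^+)$ with
\[
g_i^+=\min\{x\in K_i:\ x\ \text{has prefix}\ (p_i,p_i)\}=\zeta^\star(p_i,p_i,\{\infty\text{-limit}\})
\]
and $g_i^-=\zeta^\star(p_i,p_i+1,\{p_i\}^\infty)$. These are computable in terms of $\zeta^\star(\{p\}^\infty)$-type quantities, using the uniform product estimate mentioned in the abstract.

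The key step is to choose a target $x=M_1+t$, with $t$ slightly below $M_2$, such that $x\notin K_1+K_2$. Any decomposition $x=u+v$ forces $u\ge x-M_2$ and $v\ge x-M_1$. Taking $x$ just below $M_1+M_2$, both $u$ and $v$ must lie in the top cylinders near their maxima, and their deficits satisfy $(M_1-u)+(M_2-v)=M_1+M_2-x=:\delta$.

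So I would study the deficit sets $E_i=\{M_i-u: u\in K_i\}\cap[0,\varepsilon]$ and show that $E_1+E_2$ misses some $\delta$. Near the maximum, $E_i$ is self-similar: its successive top gaps scale by a contraction ratio $\rho_i$, which is roughly the derivative factor of appending a leading $p_i$. The first gap in $E_i$ runs from $M_i-g_i^+$ to $M_i-g_i^-$. I would choose $\delta$ inside the first gap of $E_2$ (the set with larger gaps, $p_2\ge p_1$) and check that the gaps of $E_1$ are large enough to cover the rest. This means verifying that
\[
\delta\notin\bigl([0,M_1-g_1^+]+E_2\bigr)\cup\bigl(E_1\cap[M_1-g_1^-,\delta]+[0,\delta]\cap E_2\bigr).
\]
Concretely, I would pick $\delta=M_2-g_2^-+\epsilon'$, with a bridge/gap comparison of the form "largest bridge of $E_1$ below $\delta$ is shorter than the gap of $E_2$", possibly together with a thickness inequality $t(E_1)t(E_2)<1$ near the endpoint. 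Since lower-bounded sets with $p\ge2$ have thickness well below $1$ (as the self-sum result of \cite{LY} already reflects for $p_1=p_2$), this should hold.

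The main obstacle is the uniform quantitative estimate: the inequalities must hold for all $p_1\le p_2$ simultaneously. The worst case is $p_1=p_2=2$. I would handle $p_1,p_2\ge3$ by crude product bounds showing the gaps dominate the bridges, and then do the small case $p_1=2$ with explicit numerical bounds on $\zeta^\star(\{2\}^\infty)$-type quantities. If the top gap does not suffice there, I would instead use the gap between the cylinders $k_1=p$ and $k_1=p+1$ at the second level.
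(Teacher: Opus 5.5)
\textbf{There is a genuine gap.} Looking for an omitted point just below $M_1+M_2$ cannot work when $p_1=p_2=2$, the case you yourself call the worst.

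For $p=2$ one has $F_m(2)=2m/(m+1)$. So at every natural split of $T_2$ the right retained interval has exactly the length of the gap ($N_{T,2}=G_{T,2}$). By Lemma~\ref{lem:T2-local}, the left retained interval is at least as long, except at the initial split. Hence $K=T_2\cap[\zeta(2),2]$ is a compact Cantor set with $t(K)\geq1$ and $\maxgap(K)<2-\zeta(2)=\diam(K)$. Theorem~\ref{thm:newhouse} then gives $K+K=[2\zeta(2),4]\subseteq T_2+T_2$, so every point near the top of the sum is attained. In your deficit coordinates, the top bridge and the adjacent top gap have equal length at every level, so $E_1+E_2$ fills a neighbourhood of $0$.

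Your heuristic that lower-bounded sets have thickness well below $1$ is false near the top. For $T_2$, the only bridge--gap pair with $|B|<|G|$ is the initial left bridge $[1,2\zeta(2)-2]$, which sits at the bottom. Moreover, $t(E_1)t(E_2)<1$ would not by itself produce a missing point. For $p_1=2<p_2$ you would still have to control how the two families of gap scales align, and the proposal does not do this.

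Your description of the first level is also wrong. The cylinder with $k_1=k$ is $[\zeta(k),\eta(k,\{p\}^{\infty})]$. Consecutive cylinders therefore do not touch: they are separated by the natural gaps $(\eta(k+1,\{p\}^{\infty}),\zeta(k))$. The first of these, $(B_p,\zeta(p))$ with $B_p=\eta(p+1,\{p\}^{\infty})$, is exactly what is needed, but it must be used at the bottom of the sum, not the top. At the top it fails again for $p=2$, since $A_2-\zeta(2)=|\mathcal G_0|$ exactly: the top pieces reach down to $2\zeta(2)$, and $[1,B_2]+[\zeta(2),2]$ reaches up to $B_2+2=2\zeta(2)$.

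The missing argument runs as follows. With $p_1\leq p_2$ one has
$T_{p_2}\subseteq T_{p_1}\subset[1,B_{p_1}]\cup[\zeta(p_1),A_{p_1}]$,
so every sum is either $\leq 2B_{p_1}$ or $\geq 1+\zeta(p_1)$. The inequality $2B_p<1+\zeta(p)$ (Lemma~\ref{lem:Bp-gap}) says the bottom bridge $[1,B_p]$ is shorter than the gap $(B_p,\zeta(p))$. For $p=2$ it reduces to $\zeta(2)<5/3$; for $p\geq3$ it follows from the product bound. It makes $(2B_{p_1},1+\zeta(p_1))$ a nonempty omitted interval. This interval is flanked by $2=1+1$ and $1+\zeta(p_1)$, both of which lie in the sum.
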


In particular, every sufficiently large real number is a sum of two
infinite multiple zeta-star values with prescribed, possibly different,
upper bounds on their indices. Thus the mixed sum problem for these two
classes of restricted infinite multiple zeta-star values is completely settled. The negative assertions mean that the whole sum is not an
interval; they do not assert that it contains no intervals.
Corresponding mixed products remain a natural direction for further study.

For $p\geq q+1$, we construct a natural gap of $D_q$
longer than $\diam(T_p)$. This gives an open interval disjoint from
$D_q+T_p$.

For $p=q=2$, an exact tail formula isolates one exceptional bridge of $T_2$, while a separation of the possible gap lengths of $D_2$ allows the local Cabrelli--Hare--Molter criterion to be applied directly. When $p=q\geq3$, the diagonal identity follows from two lateral thickness estimates. If $2\leq p<q$, we have the inclusion $\mathcal T_q\subseteq\mathcal T_p$. So the result follows at once from the diagonal case.

Finally, the upper--upper formula follows from compact truncations and the Newhouse--Astels interval criterion, while the lower--lower case is obtained from the initial subdivision of the corresponding lower-bounded set.

Section~\ref{sec:prelim} develops the common estimates and gap criteria.
Section~\ref{sec:non-diagonal} proves the negative result.
Section~\ref{sec:positive-range} establishes the diagonal cases and hence
the positive classification. Section~\ref{sec:mixed} proves the two
remaining mixed sum results.

\subsection*{AI disclosure}
Artificial intelligence tools  were used to assist with literature searches,
identifying proof methods, organizing the exposition, carrying out
calculations, and checking the arguments for possible gaps.

\section{Notation and preliminaries}\label{sec:prelim}

\subsection{Formulas and natural subdivisions}
We follow the notation and natural constructions in~\cite{LY,lit,lir,hmo}.
The compact closure $T_p$ is
\begin{equation}\label{eq:Tp-closure}
 T_p :=\overline{\eta(\mathcal T_p)}=\eta(\mathcal T_p)\cup
 \{\zeta^\star(k_1,\ldots,k_r):r\geq1,\ k_j\geq p\}
 \cup\{1\}.
\end{equation}
By~\cite[Theorem~1.4(iii)]{lit}, $D_q:=\eta(\mathcal D_q)$ is relatively closed in
$(1,+\infty)$. We use the endpoint notation
\begin{equation}\label{eq:As}
	A_s:=\zeta^\star(\{s\}^{\infty})
	=\prod_{n=2}^{\infty}(1-n^{-s})^{-1},\qquad s\geq2.
\end{equation}

 Since $D_q\subset[A_q,+\infty)$ and $A_q>1$, it is
also closed in $\mathbb R$. Its natural construction in
\cite[Section~4]{LY} shows that it is unbounded, perfect, and nowhere dense. The set $T_p:=\overline{\eta(\mathcal T_p)}$ is a compact Cantor set. The correspondence reverses
lexicographic order: at the first differing index, the smaller entry gives
the larger value~\cite{lit,LY}.  Their endpoints are
\begin{equation}\label{eq:endpoints}
 \min D_q=A_q,\qquad \conv(T_p)=[1,A_p],\qquad
 \diam(T_p)=A_p-1,\qquad A_2=2.
\end{equation}
Here and below, a compact Cantor set is nonempty, compact, perfect, and
nowhere dense.

For $s\geq1$ and $m\geq1$, define
\[
 F_m(s):=\prod_{u=2}^{m}(1-u^{-s})^{-1},\qquad F_1(s)=1.
\]

A nonempty prefix $P=(k_1,\ldots,k_r)$ is admissible for
$\mathcal D_q$ if $2\leq k_1\leq q$ and $1\leq k_j\leq q$ for
$j\geq2$, and for $\mathcal T_p$ if $k_j\geq p$ for every $j$.

For a finite admissible prefix $P=(k_1,\ldots,k_r)$, put
\[
 \Lambda_P(m):=\sum_{n_1\geq\cdots\geq n_r\geq m}
 \frac1{n_1^{k_1}\cdots n_r^{k_r}},\qquad
 \Lambda_{\emptyset}(m)=1.
\]
The coefficients are positive and non-increasing in $m$. For an integer
$s\geq1$, whenever the infinite index is admissible, the tail formula is
\begin{equation}\label{eq:tailformula}
 \eta(P,i,\{s\}^{\infty})
 =\sum_{m\geq1}\Lambda_P(m)m^{-i}F_m(s).
\end{equation}
We also use
\begin{equation}\label{eq:height-one}
 F_m(1)=m,\qquad
 h_r:=\zeta^\star(2,\{1\}^{r})=(r+1)\zeta(r+2)\quad(r\geq0).
\end{equation}
Here is a derivation of the identities used repeatedly below. For a fixed
upper summation limit $m$, a tail of $\ell$ copies of $s$ contributes
a sum over multisets from $1,\ldots,m$. The multiplicities of
$2,\ldots,m$ may be any nonnegative integers with total at most
$\ell$; all remaining entries are $1$. Monotone convergence as
$\ell\to+\infty$ therefore gives a product of geometric series,
namely $F_m(s)$. Summing over the entry $m$ immediately after the
prefix proves~\eqref{eq:tailformula}, again by monotone convergence.
For $s=1$, it is clear that $F_m(1)=m$. Hence
\begin{equation}\label{eq:finite-endpoint}
 \eta(P,i+1,\{1\}^{\infty})=\zeta^\star(P,i)
\end{equation}
whenever $(P,i)$ is a finite admissible index. For $s\geq2$, the
identity $m^{-s}F_m(s)=F_m(s)-F_{m-1}(s)$ for $m\geq2$, applied
with $P=\emptyset$ and $i=s$, also proves the product representation
of $A_s$ in~\eqref{eq:As}. The exceptional infinite index
$(2,\{1\}^{\infty})$ has value $+\infty$. No subtraction of two
divergent values is used in this paper.

The height-one identity in~\eqref{eq:height-one}
follows from the classical sum formula, proved independently
by Granville~\cite{gran} and Zagier~\cite{zag},
after expanding the star value into ordinary multiple zeta values.
For completeness, we give a generating-function proof below.
For $0\leq t<1$, the beta integral
gives
\[
 \frac1n\prod_{j=1}^n(1-t/j)^{-1}
 =\int_0^1 x^{n-1}(1-x)^{-t}\,dx.
\]
Consequently, expanding nonnegative series and then setting $u=1-x$,
\begin{align*}
 \sum_{r\geq0}h_rt^r
 &=\sum_{n\geq1}\frac1{n^2}\prod_{j=1}^n(1-t/j)^{-1}
 =\int_0^1\frac{-u^{-t}\log u}{1-u}\,du\\
 &=\sum_{n\geq1}\frac1{(n-t)^2}
 =\sum_{r\geq0}(r+1)\zeta(r+2)t^r.
\end{align*}
The integrals and series are finite for $0\leq t<1$, so coefficient
comparison proves the formula for $h_r$. 

For an admissible nonempty prefix $P$ of $\mathcal D_q$ and
$1\leq i\leq q-1$, the construction interval corresponding to next
indices $i,\ldots,q$ is subdivided by removing the natural gap
\begin{equation}\label{eq:Egaps}
	\mathcal G_{D,q}(P,i)
	=
	\bigl(\eta(P,i+1,\{1\}^{\infty}),
	\eta(P,i,\{q\}^{\infty})\bigr).
\end{equation}
The left and right retained intervals are
\[
\bigl[\eta(P,\{q\}^{\infty}),
\eta(P,i+1,\{1\}^{\infty})\bigr]
\quad\text{and}\quad
\bigl[\eta(P,i,\{q\}^{\infty}),
\eta(P,i,\{1\}^{\infty})\bigr],
\]
respectively. If $(P,i,\{1\}^{\infty})=(2,\{1\}^{\infty})$, the
right retained interval is
$[\eta(P,i,\{q\}^{\infty}),+\infty)$.
For $P=\emptyset$, the same formulas apply with $2\leq i\leq q-1$.
When $q=2$, the first subdivision has $P=(2)$ and $i=1$.

For an admissible nonempty prefix $P$ of $\mathcal T_p$ and $i\geq p$,
the construction interval corresponding to next indices at least $i$
is subdivided by removing the natural gap
\begin{equation}\label{eq:Tgaps}
	\mathcal G_{T,p}(P,i)
	=
	\bigl(\eta(P,i+1,\{p\}^{\infty}),
	\zeta^\star(P,i)\bigr).
\end{equation}
The left and right retained intervals are
\[
\bigl[\zeta^\star(P),
\eta(P,i+1,\{p\}^{\infty})\bigr]
\quad\text{and}\quad
\bigl[\zeta^\star(P,i),
\eta(P,i,\{p\}^{\infty})\bigr],
\]
respectively. At the initial level, $P=\emptyset$, $i\geq p$, and
$\zeta^\star(\emptyset)=1$.

Write $M,G,N$ for the lengths of the left retained interval, the gap,
and the right retained interval, respectively. An unbounded retained
interval has length $+\infty$. By~\eqref{eq:tailformula}, for $D_q$
these lengths are
\begin{equation}\label{eq:E-M}
	M_{D,q}(P,i)
	=
	\sum_{m\geq2}\Lambda_P(m)m^{-i}
	\bigl(1-m^{i-q}F_m(q)\bigr),
\end{equation}
\begin{equation}\label{eq:E-G}
	G_{D,q}(P,i)
	=
	\sum_{m\geq2}\Lambda_P(m)m^{-i}
	\bigl(F_m(q)-1\bigr),
\end{equation}
\begin{equation}\label{eq:E-N}
	N_{D,q}(P,i)
	=
	\sum_{m\geq2}\Lambda_P(m)m^{-i}
	\bigl(m-F_m(q)\bigr).
\end{equation}
For $T_p$, they are
\begin{equation}\label{eq:T-M}
	M_{T,p}(P,i)
	=
	\sum_{m\geq2}\Lambda_P(m)m^{-i}\frac{F_m(p)}{m},
\end{equation}
\begin{equation}\label{eq:T-G}
	G_{T,p}(P,i)
	=
	\sum_{m\geq2}\Lambda_P(m)m^{-i}
	\left(1-\frac{F_m(p)}{m}\right),
\end{equation}
\begin{equation}\label{eq:T-N}
	N_{T,p}(P,i)
	=
	\sum_{m\geq2}\Lambda_P(m)m^{-i}
	\bigl(F_m(p)-1\bigr).
\end{equation}
Repeating these subdivisions over all admissible finite prefixes gives
$D_q$ and $T_p$. Their bounded gaps are therefore exactly the natural
gaps above~\cite[Section~4]{LY}.

\subsection{A uniform product estimate}
\begin{lem}\label{lem:product}
For $s\geq2$, set $c_s=2^{s-1}-1$. Then
\begin{equation}\label{eq:product-bound}
 F_m(s)\leq\frac{(c_s+1)m}{c_sm+1}\qquad(m\geq1).
\end{equation}
Moreover,
\begin{equation}\label{eq:product-limit}
 F_m(2)=\frac{2m}{m+1},\qquad
 A_s-1<\frac1{c_s}\quad(s>2).
\end{equation}
\end{lem}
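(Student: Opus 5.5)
Write $B_m:=\frac{(c_s+1)m}{c_sm+1}$ for the right-hand side of \eqref{eq:product-bound}. The bound will follow by induction on $m$, comparing the ratio of consecutive products with the ratio of consecutive values of $B_m$.

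\textbf{Base case.} For $m=1$ both sides equal $1$.

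\textbf{Inductive step.} Since $F_m(s)=F_{m-1}(s)(1-m^{-s})^{-1}$, it suffices to show, for $m\geq2$,
\[
(1-m^{-s})^{-1}\leq \frac{B_m}{B_{m-1}}=\frac{m\,(c_s(m-1)+1)}{(m-1)(c_sm+1)}.
\]
Put $c=c_s$. A short computation gives
\[
\frac{B_m}{B_{m-1}}-1=\frac{1}{(m-1)(cm+1)} .
\]
Also
\[
(1-m^{-s})^{-1}-1=\frac{1}{m^s-1}.
\]
So the step reduces to the elementary inequality
\[
m^s-1\geq (m-1)(cm+1).
\]
Dividing by $m-1$, this becomes
\[
1+m+\cdots+m^{s-1}\geq cm+1 ,
\]
that is,
\[
m+m^2+\cdots+m^{s-1}\geq (2^{s-1}-1)m .
\]
After dividing by $m$, this reads $\sum_{j=0}^{s-2}m^j\geq\sum_{j=0}^{s-2}2^j$. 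It holds termwise because $m\geq2$, with equality exactly at $m=2$. This equality case is consistent with the choice of $c_s$: it is the constant that makes the step from $m=1$ to $m=2$ exact. So the argument needs no further estimate, and the induction closes.

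\textbf{The case $s=2$.} Here $c_2=1$, and the reduced inequality $m^2-1\geq(m-1)(m+1)$ is an identity. Hence every inductive step is an equality, and $F_m(2)=\frac{2m}{m+1}$ exactly. Alternatively, this is the telescoping product $\prod_{u=2}^m\frac{u^2}{(u-1)(u+1)}$.

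\textbf{The limit bound.} By \eqref{eq:As}, $A_s=\lim_m F_m(s)$, so the bound gives
\[
A_s\leq\lim_m B_m=\frac{c_s+1}{c_s}=1+\frac1{c_s}.
\]
Strictness for $s>2$ needs one more step, because a limit of inequalities is only non-strict. For $s\geq3$ and $m\geq3$ the reduced inequality is strict, since the $j=1$ term satisfies $m>2$. Therefore $F_m(s)/B_m$ is non-increasing in $m$ and strictly decreasing from $m=3$ on. Fix $m_0=3$. Then $F_m(s)\leq \kappa B_m$ for all $m\geq m_0$, with $\kappa=F_{3}(s)/B_{3}<1$. Letting $m\to\infty$ gives
\[
A_s\leq\kappa\,\frac{c_s+1}{c_s}<\frac{c_s+1}{c_s},
\]
so $A_s-1<1/c_s$.

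The only real obstacle is this strictness in the limit, and tracking the ratio $F_m/B_m$ as above handles it. The remaining steps are direct algebra.
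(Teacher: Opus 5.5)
Your proof is correct and is essentially the paper's argument. Your inductive step $(1-m^{-s})^{-1}\le B_m/B_{m-1}$ is exactly the paper's factorwise bound, whose product telescopes to \eqref{eq:product-bound}, and your strictness argument via $\kappa=F_3(s)/B_3<1$ matches the paper's use of the strict factor $\rho_s>1$ at $n=3$. The only difference is how you verify the key inequality $m^s-1\ge(m-1)(c_sm+1)$, equivalently $m^{s-1}\ge c_s(m-1)+1$: you compare the geometric sums $\sum_{j=0}^{s-2}m^j\ge\sum_{j=0}^{s-2}2^j$, which needs $s$ to be an integer (as it is wherever the lemma is used), whereas the paper uses the convexity secant of $x^{s-1}$ through $1$ and $2$.
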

\begin{proof}
Convexity of $x^{s-1}$, applied to the secant through $1$ and $2$, gives
\[
 n^{s-1}\geq c_s(n-1)+1\qquad(n\geq2).
\]
Consequently,
\[
 1-\frac1{n^s}
 \geq1-\frac1{n[c_s(n-1)+1]}
 =\frac{(c_sn+1)(n-1)}{n[c_s(n-1)+1]}.
\]
Multiplication from $n=2$ to $m$ telescopes and proves
\eqref{eq:product-bound}. For $s=2$ every factor is an equality. For
$s>2$, the ratio of the two factors at $n=3$ is a fixed number
$\rho_s>1$. Keeping this factor in the telescoping product gives
$A_s\leq(c_s+1)/(\rho_sc_s)<(c_s+1)/c_s$ after passage to the limit.
This proves~\eqref{eq:product-limit}.
\end{proof}

\begin{lem}\label{lem:E-estimates}
Every natural gap of $D_q$ satisfies
\begin{equation}\label{eq:E-bridges}
 M_{D,q}\geq c_qG_{D,q},\qquad
 N_{D,q}\geq2c_qG_{D,q},\qquad
 G_{D,q}<A_q-1.
\end{equation}
For $q=2$, the first inequality is an equality.
\end{lem}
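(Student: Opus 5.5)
Since all coefficients $\Lambda_P(m)m^{-i}$ in \eqref{eq:E-M}--\eqref{eq:E-N} are positive, I would prove the first two inequalities in \eqref{eq:E-bridges} \emph{termwise} in $m\geq2$, writing $F=F_m(q)$ and $c=c_q$. For $M_{D,q}\geq c_qG_{D,q}$, note that $i\leq q-1$ gives $m^{i-q}\leq m^{-1}$. Hence $1-m^{i-q}F\geq 1-F/m$, and it suffices to show $1-F/m\geq c(F-1)$. This rearranges to
\[
F\leq \frac{(c+1)m}{cm+1},
\]
which is exactly \eqref{eq:product-bound} of Lemma~\ref{lem:product}. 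For $q=2$ we have $c_2=1$ and $i=1=q-1$, so $m^{i-q}=1/m$ holds with equality. By \eqref{eq:product-limit}, $F_m(2)=2m/(m+1)$ is also an equality, so every termwise step is an equality and $M_{D,2}=G_{D,2}$.

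For $N_{D,q}\geq 2c_qG_{D,q}$, the unbounded case $N=+\infty$ is trivial. Otherwise I need $m-F\geq 2c(F-1)$, i.e.\ $F(1+2c)\leq m+2c$. Using \eqref{eq:product-bound}, it suffices that $(c+1)m(1+2c)\leq (m+2c)(cm+1)$. Expanding, the difference of the two sides is
\[
cm^2-3cm+2c=c(m-1)(m-2)\geq0,
\]
which settles the claim for all $m\geq2$.

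For $G_{D,q}<A_q-1$, I would again start termwise. Since $F_m(q)$ increases to $A_q$ strictly, we get $F_m(q)-1<A_q-1$, so
\[
G_{D,q}(P,i)<(A_q-1)\,S_P(i),\qquad S_P(i):=\sum_{m\geq2}\Lambda_P(m)m^{-i}.
\]
It then remains to show $S_P(i)\leq1$, and I expect this to be the main obstacle. For $P=(2)$, $i=1$ it is a direct computation: $S=\zeta^\star(2,1)-2\zeta(2)+1=2\zeta(3)-2\zeta(2)+1<1$. For general $P$ containing many $1$'s, $\Lambda_P(m)$ can be large, so a crude bound fails. I would first try a nesting argument instead: every natural gap $\mathcal G_{D,q}(P,i)$ lies inside the left retained interval at $i$, or inside the bridge of the parent subdivision, whose length is controlled by a first-level gap via the already-proved ratio bounds. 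This reduces the claim to the gaps at $P=\emptyset$ (or $P=(2)$ when $q=2$), where $\Lambda_\emptyset\equiv1$ and the sums are explicit. Failing that, I would try to compare $S_P(i)$ with $\zeta^\star(P,i)-\zeta^\star(P)-\Lambda_P(1)$, using \eqref{eq:finite-endpoint} to relate it to a retained interval of known length.
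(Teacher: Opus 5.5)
Your termwise proofs of $M_{D,q}\geq c_qG_{D,q}$, of $N_{D,q}\geq2c_qG_{D,q}$ via $c(m-1)(m-2)\geq0$, and of the equality $M_{D,2}=G_{D,2}$ are correct and match the paper. So does your reduction of $G_{D,q}<A_q-1$ to $S_P(i)\leq1$.

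The gap is that $S_P(i)\leq1$ is never proved, and the nesting strategy you propose cannot prove it. The decisive gaps are those with prefix $P_r=(2,\{1\}^r)$ and split $i=1$. Their lengths $\Delta_{r,q}$ \emph{increase} with $r$ by \eqref{eq:increasing-gaps}, so they cannot be reduced to first-level gaps. Each of them is also removed from an unbounded construction interval, whose right end is $\eta(2,\{1\}^{\infty})=+\infty$, so no parent bridge controls it. The ratio bounds only compare a gap with its own retained intervals, and bounding those already needs the missing estimate. Your remark that a crude bound fails is also mistaken: a crude termwise comparison is exactly what works.

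Here is the missing step. For $P=(k_1,\ldots,k_r)$ with $r\geq1$, the conditions $k_1\geq2$ and $k_j\geq1$ give $\Lambda_P(m)\leq\Lambda_{(2,\{1\}^{r-1})}(m)$ termwise, and $m^{-i}\leq m^{-1}$. Hence
\[
 S_P(i)\leq\sum_{m\geq2}\frac{\Lambda_{(2,\{1\}^{r-1})}(m)}m
 =h_r-h_{r-1}
 =1+\sum_{n\geq2}\frac{(r+1)-rn}{n^{r+2}}<1
\]
by \eqref{eq:height-one}; this is the paper's route. Alternatively, $\Lambda_{(2,\{1\}^{r-1})}(m)<1/(m-1)$ follows by induction from the telescoping identity $\sum_{n\geq m}n^{-1}(n-1)^{-1}=(m-1)^{-1}$. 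Then the sum is less than $\sum_{m\geq2}\frac1{m(m-1)}=1$. For $P=\emptyset$ one has $i\geq2$ and $S_\emptyset(i)\leq\zeta(2)-1$.

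Two computational slips should also be fixed. First, $S_{(2)}(1)=\zeta^\star(2,1)-\zeta(2)=2\zeta(3)-\zeta(2)$, not $2\zeta(3)-2\zeta(2)+1$. Second, since $\Lambda_P(1)=\zeta^\star(P)$, the identity behind your fallback is $S_P(i)=\zeta^\star(P,i)-\zeta^\star(P)$, without the extra $-\Lambda_P(1)$. That identity, together with the comparison with the prefix $(2,\{1\}^{r-1})$, closes the argument.
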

\begin{proof}
Lemma \ref{lem:product} is equivalent to
\[
 c_q( F_m(q) -1)\leq1- \frac{F_m(q)}{m} \leq1-m^{i-q} F_m(q),
\]
which gives the pointwise bound needed for the first inequality
in~\eqref{eq:E-bridges}. For the second inequality, observe that
\[
 \frac{m+2c_q}{2c_q+1}-\frac{(c_q+1)m}{c_qm+1}
 =\frac{c_q(m-1)(m-2)}{(2c_q+1)(c_qm+1)}\geq0.
\]
Thus $m- F_m(q) \geq2c_q(F_m(q)-1)$. Summation in~\eqref{eq:E-M}--\eqref{eq:E-N} proves
the first two estimates. When $q=2$, substituting $F_m(2)=2m/(m+1)$ gives
$M_{D,2}=G_{D,2}$.

For the gap bound, $F_m(q)<A_q$ for every finite $m$, and hence
\[
 G_{D,q}(P,i)<(A_q-1)\sum_{m\geq2}\Lambda_P(m)m^{-i}.
\]
If $P$ has length $r\geq1$, termwise comparison with the prefix
$(2,\{1\}^{r-1})$ gives
\[
 \sum_{m\geq2}\Lambda_P(m)m^{-i}
 \leq h_r-h_{r-1}
 =1+\sum_{n\geq2}\frac{(r+1)-rn}{n^{r+2}}<1.
\]
For the empty prefix, $i\geq2$ and the same sum is at most
$\zeta(2)-1<1$. This proves the final assertion.
\end{proof}

\subsection{Prefix bounds and a distinguished family of gaps}
\begin{lem}\label{lem:prefix}
For $r\geq0$, let $P_r=(2,\{1\}^{r})$. Then, for $m\geq2$,
\begin{equation}\label{eq:prefix-spine}
\frac1m<\Lambda_{P_r}(m)<\frac1{m-1},\qquad
\Lambda_{P_{r+1}}(m)>\Lambda_{P_r}(m).
\end{equation}
If $P$ is any nonempty admissible upper-bounded prefix not equal to a
$P_r$, then
\begin{equation}\label{eq:prefix-small}
 \Lambda_P(m)\leq\frac1{m(m-1)}\qquad(m\geq2).
\end{equation}
\end{lem}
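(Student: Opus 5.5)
The plan is to prove everything by induction on the length of the prefix, using a one-step recursion obtained by summing over the smallest summation variable. For a prefix $(Q,k)$ with $Q$ nonempty, the last variable $n_r=n$ ranges over $n\geq m$, and the remaining variables satisfy $n_1\geq\cdots\geq n_{r-1}\geq n$. This gives
\[
 \Lambda_{(Q,k)}(m)=\sum_{n\geq m}n^{-k}\Lambda_Q(n),
\]
and in particular $\Lambda_{P_{r}}(m)=\sum_{n\geq m}n^{-1}\Lambda_{P_{r-1}}(n)$ for $r\geq1$. The base objects are the length-one prefixes, for which $\Lambda_{(k)}(m)=\sum_{n\geq m}n^{-k}$. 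All the required inequalities then come from the telescoping comparisons
\[
 \frac1{n(n+1)}<\frac1{n^2}<\frac1{n(n-1)},\qquad \sum_{n\geq m}\frac1{n(n\pm1)}=\frac1{m-(1\mp1)/2}\ \text{(i.e. $1/m$ resp. $1/(m-1)$)}.
\]

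\emph{Spine bounds.} For $r=0$, $\Lambda_{P_0}(m)=\sum_{n\geq m}n^{-2}$ lies strictly between $\sum_{n\geq m}1/(n(n+1))=1/m$ and $\sum_{n\geq m}1/(n(n-1))=1/(m-1)$. Assume $1/n<\Lambda_{P_{r-1}}(n)<1/(n-1)$ for all $n\geq2$. Substituting into the recursion gives
\[
 \sum_{n\geq m}\frac1{n^2}<\Lambda_{P_r}(m)<\sum_{n\geq m}\frac1{n(n-1)}.
\]
The left side exceeds $1/m$ and the right side equals $1/(m-1)$, which closes the induction.

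\emph{Monotonicity in $r$.} For monotonicity, start from $\Lambda_{P_1}(m)=\sum_{n\geq m}n^{-1}\Lambda_{P_0}(n)$. Since $\Lambda_{P_0}(n)>1/n$, this is larger than $\sum_{n\geq m}n^{-2}=\Lambda_{P_0}(m)$. Inductively,
\[
 \Lambda_{P_{r+1}}(m)-\Lambda_{P_r}(m)=\sum_{n\geq m}n^{-1}\bigl(\Lambda_{P_r}(n)-\Lambda_{P_{r-1}}(n)\bigr)>0 .
\]

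\emph{Non-spine prefixes.} For \eqref{eq:prefix-small}, I induct on the length of $P\neq P_r$. If $P=(k_1)$ has length one, then $k_1\geq3$, so
\[
 \Lambda_P(m)\leq\sum_{n\geq m}\frac1{n^2(n-1)}\leq\frac1m\sum_{n\geq m}\frac1{n(n-1)}=\frac1{m(m-1)}.
\]
If $P=(Q,k)$ has length at least two, there are two cases.

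\begin{itemize}
\item $Q$ is not a spine prefix. The induction hypothesis and $n^{-k}\leq n^{-1}$ give
\[
 \Lambda_P(m)\leq\sum_{n\geq m}\frac1{n}\cdot\frac1{n(n-1)}\leq\frac1{m(m-1)}.
\]
\item $Q=P_s$ for some $s$. Then $P\neq P_{s+1}$ forces $k\geq2$, and the spine bound $\Lambda_{P_s}(n)<1/(n-1)$ gives the same estimate $\sum_{n\geq m}1/(n^2(n-1))\leq1/(m(m-1))$.
\end{itemize}

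The only delicate point is organizing this case split so that every non-spine prefix is covered. Peeling off the \emph{last} index, rather than the first, is what makes the dichotomy "$Q$ is spine or not" exhaustive. The key inequality in every case is the extraction of the factor $1/n\leq1/m$ from the telescoping sum.
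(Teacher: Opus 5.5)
Your proof is correct and follows the paper's argument. Both use the append-an-index recursion and induction along the spine $P_r$. For non-spine prefixes, both observe that such a prefix either starts with an index $\geq3$ or leaves some $P_s$ by appending an index $\geq2$, after which the bound $1/(m(m-1))$ persists under further indices. The one difference is your kernel estimate $\sum_{n\geq m}\frac{1}{n^2(n-1)}\leq\frac1m\sum_{n\geq m}\frac1{n(n-1)}$, which is a slightly simpler substitute for the paper's comparison with $\frac{2}{(n-1)n(n+1)}$.
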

\begin{proof}
Appending an index gives the recursion
\begin{equation}\label{eq:prefix-recursion}
 \Lambda_{(P,j)}(m)=\sum_{n\geq m}\Lambda_P(n)n^{-j}.
\end{equation}
Integral comparison for the decreasing function $x^{-2}$ gives
\[
\frac{1}{m}
<
\Lambda_{P_0}(m)
=
\sum_{n\ge m}\frac{1}{n^2}
<
\frac{1}{m-1}.
\]
Since
\[
\Lambda_{P_1}(m) >\sum_{n^\prime\geq n\geq m}\frac{1}{n^\prime(n^\prime+1) n}=\sum_{n\geq m}\frac{1}{n^2}=\Lambda_{P_0}(m),\]
the recursion proves~\eqref{eq:prefix-spine} by induction.

For the second assertion, use
\begin{equation}\label{eq:prefix-kernel}
 \sum_{n\geq m}\frac1{n^2(n-1)}
 <\sum_{n\geq m}\frac2{(n-1)n(n+1)}
 =\frac1{m(m-1)}.
\end{equation}
If the first index is at least $3$, the same bound follows from
$$\frac{1}{n^{3}}<\frac{2}{(n-1)n(n+1)}.$$ Otherwise, the first departure from a prefix
$P_r$ appends an index $j\geq2$, so~\eqref{eq:prefix-spine} and
\eqref{eq:prefix-kernel} give~\eqref{eq:prefix-small}. Once obtained,
this estimate is preserved by every further index $j\geq1$, again by
\eqref{eq:prefix-recursion} and~\eqref{eq:prefix-kernel}.
\end{proof}

The gaps corresponding to $(P_r,1)$ will be used repeatedly. Write
\begin{equation}\label{eq:spine-gaps}
 \begin{aligned}
 U_r&=\eta(2,\{1\}^{r},2,\{1\}^{\infty})=h_{r+1},\\
 V_{r,q}&=\eta(2,\{1\}^{r+1},\{q\}^{\infty}),\\
 \Delta_{r,q}&=V_{r,q}-U_r
   =\sum_{m\geq2}\frac{\Lambda_{P_r}(m)}m\bigl(F_m(q)-1\bigr).
 \end{aligned}
\end{equation}
By~\eqref{eq:prefix-spine},
\begin{equation}\label{eq:increasing-gaps}
 0<\Delta_{r,q}<\Delta_{r+1,q}.
\end{equation}

\subsection{Bridges, thickness, and two half-line criteria}
A bounded gap of a closed set $C$ is a bounded connected component of
$\conv(C)\setminus C$. For a compact Cantor set, write $\maxgap(C)$ for
the supremum of its gap lengths. For a gap $G=(a,b)$, 
the \emph{left Newhouse bridge} $B_L^C(G)$ of $G$ is the maximal closed interval in
$\operatorname{conv}(C)$ having $a$ as its right endpoint and containing no
point of any gap $G'$ with $|G'|\geq|G|$. The \emph{right Newhouse bridge} $B_R^C(G)$ is defined symmetrically.
Either interval is called a \emph{Newhouse bridge} of $G$. 

A pair
$(B,G)$, where $B$ is either Newhouse bridge associated with $G$, is
called a \emph{Newhouse bridge--gap pair}.
Infinite lengths
are allowed for an unbounded set. 
For a compact Cantor set $C$, its Newhouse thickness is
\[
 t(C)=\inf_G\min_{\sigma\in\{L,R\}}
              \frac{|B_\sigma^C(G)|}{|G|}.
\]

We will use a simple bridge comparison. Suppose that both retained
intervals at every natural subdivision have length at least $c$ times
the deleted gap. Then every Newhouse bridge has the same lower bound.
Indeed, a bridge either contains the adjacent retained interval, or is
stopped by a later gap $G'$ inside it with $|G'|\geq|G|$. In the latter
case, the retained interval of $G'$ facing $G$ lies in the bridge and
has length at least $c|G'|\geq c|G|$. In particular,
Lemma~\ref{lem:E-estimates} implies
\begin{equation}\label{eq:E-Newhouse}
 |B_\sigma^{D_q}(G)|\geq c_q|G|\qquad(\sigma=L,R).
\end{equation}

\begin{Thm}[Newhouse--Astels interval criterion; {\cite[Theorem~2.2]{astels}}]
\label{thm:newhouse}
Let $C_1,C_2$ be compact Cantor sets. If
\[
 \maxgap(C_1)\leq\diam(C_2),\qquad
 \maxgap(C_2)\leq\diam(C_1),\qquad t(C_1)t(C_2)\geq1,
\]
then $C_1+C_2$ is its full convex hull.
\end{Thm}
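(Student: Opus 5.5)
The plan is to reduce the criterion to an intersection statement and then run Newhouse's linked-gap argument, taking care that the equality case $t(C_1)t(C_2)=1$ survives. Fix $x\in[\min C_1+\min C_2,\max C_1+\max C_2]$ and put $K=x-C_2$. Then $x\in C_1+C_2$ if and only if $C_1\cap K\neq\varnothing$. The set $K$ is a compact Cantor set with the same gap lengths and the same thickness as $C_2$, since reflection only exchanges left and right bridges. The choice of $x$ gives $\min K\le\max C_1$ and $\min C_1\le\max K$, so the convex hulls of $C_1$ and $K$ meet. The hypotheses $\maxgap(C_1)\le\diam(C_2)$ and $\maxgap(C_2)\le\diam(C_1)$ say that neither set can be hidden inside a bounded gap of the other, except possibly by touching its endpoints. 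It therefore suffices to prove the following: if two compact Cantor sets have meeting hulls, neither lies in the interior of a gap of the other, and $t(C_1)t(K)\ge1$, then $C_1\cap K\neq\varnothing$.

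To handle the non-strict inequality, I will work with finite stages. Order the gaps of each set by decreasing length. For $n\ge0$, let $C_1^{(n)}$ and $K^{(n)}$ be the finite unions of closed intervals obtained from the convex hulls by removing the $n$ largest open gaps of each set. These sets decrease in $n$ and their intersections are $C_1$ and $K$. By compactness, $C_1\cap K\neq\varnothing$ will follow once every $C_1^{(n)}\cap K^{(n)}$ is nonempty. Working with closed intervals means that touching endpoints count as intersection, and this is exactly what lets equality be allowed.

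For a fixed $n$, suppose for contradiction that $C_1^{(n)}\cap K^{(n)}=\varnothing$. Then every point of one set lies in a removed gap of the other. Call a pair $(U,V)$, with $U$ a removed gap of $C_1$ and $V$ a removed gap of $K$, \emph{linked} if each contains exactly one endpoint of the other.
\begin{enumerate}
\item A first linked pair is obtained from the hull condition together with the two $\maxgap\le\diam$ hypotheses. Take the extreme point of one hull lying inside the other hull; it lies in some gap of the other set, and that gap cannot swallow the whole first set, so a linked pair results.
\item Given a linked pair with $|U|\le|V|$, say, the endpoint of $V$ lying inside $U$ is adjacent to a Newhouse bridge $B$ of $V$ that also points into $U$. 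This bridge satisfies $|B|\ge t(K)|V|$, and it contains a point of $K^{(n)}$.
\item If $B$ reached past $U$, it would contain the far endpoint of $U$, a point of $C_1^{(n)}$. Since $B$ is a bridge, that endpoint is not in a gap of $K$ at least as long as $V$. Either it is then a point of the intersection, or it lies in a strictly shorter gap $V'$ of $K$ which is linked with $U$.
\item If instead $B$ stays inside $U$, apply the same reasoning with the bridge of $U$ and $t(C_1)$. The product condition $t(C_1)t(K)\ge1$ guarantees that at least one of the two bridges reaches across the opposite gap.
\end{enumerate}
Each step replaces one gap of the linked pair by a strictly smaller removed gap. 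There are only finitely many removed gaps at stage $n$, so the process terminates, and the only possible termination is at a common point. This contradicts the assumption.

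The main obstacle is the boundary case $t(C_1)t(K)=1$, where the bridge of one gap may end exactly at the endpoint of the other gap. I expect to handle this in two ways. First, the relevant interval is closed, so such an endpoint is a genuine common point of $C_1^{(n)}$ and $K^{(n)}$. Second, ties between equal gap lengths must be broken consistently in the definition of $C^{(n)}$, so that a bridge is only stopped by gaps that have actually been removed. Checking that the linked-pair step still yields strict descent under these conventions is where the real care is needed.
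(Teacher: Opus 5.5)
Your proposal is correct. The paper does not prove this statement itself; it quotes it from Astels. Your argument is nonetheless the same linked-gap descent that the paper carries out for Proposition~\ref{prop:half-line}(a), with the initial linked pair taken from two compact hulls instead of a half-line. Indeed, $t(C_1)t(C_2)\geq1$ gives $|B_1|\,|B_2|\geq|G_1|\,|G_2|$ for any bridge--gap pairs of the two sets, which is exactly the local alternative used there.

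The one real difference is how you force termination. You run the descent inside the finite stages $C_1^{(n)},K^{(n)}$ and finish with nested compactness. The paper instead assumes $C_1\cap K=\varnothing$ outright, uses $\mathrm{dist}(C_1,K)>0$ to show that only finitely many linked pairs exist, and picks one minimizing $|U|+|V|$. Either device works here.

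Your commentary misplaces the role of the stages, though. They are not what admits the equality case. For a linked pair $u_-<v_-<u_+<v_+$ with $|B_L^K(V)|\geq|U|$, the bound $v_--u_-<|U|\leq|B_L^K(V)|$ already puts $u_-$ strictly inside the bridge. Hence any gap containing $u_-$ is strictly shorter than $V$, with or without stages.

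Nor is any tie-breaking needed. Newhouse bridges are defined from the full sets and avoid every gap of length at least $|V|$, whether or not it has been removed at stage $n$, so any enumeration of the gaps would serve.
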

The non-strict thickness hypothesis is intentional: Astels's theorem
includes equality. This boundary case will be needed when both upper
bounds equal $2$.

For lateral estimates we need a different notion of bridge. A
\emph{gap presentation} $\mathcal U=(U_1,U_2,\ldots)$ enumerates the
bounded gaps. At the moment $U_n$ is removed, let $I_n$ be the connected component
of $\conv(C)\setminus\bigcup_{j<n}U_j$ containing it, and write
$I_n=L_n\cup U_n\cup R_n$. The intervals $L_n$ and $R_n$ are the left and right
\emph{presentation bridges}, respectively, and
\[
 \tau_L(C;\mathcal U)=\inf_n\frac{|L_n|}{|U_n|},\qquad
 \tau_R(C;\mathcal U)=\inf_n\frac{|R_n|}{|U_n|}.
\]
They need not be Newhouse bridges.

For the natural constructions, we use a gap presentation compatible with
the order of subdivision. More precisely, the gaps are ordered by their
depth in the natural binary subdivision tree, with any fixed order within each level, where each subdivision
separates one construction interval into two retained intervals by
removing a gap. Here the depth refers to the number of successive
subdivisions, rather than to the length of the index prefix. In
particular, successive split indices associated with the same prefix
occur at different depths along the same branch.

This gives a genuine enumeration of all natural gaps. Indeed, every
binary level contains only finitely many gaps, while every natural gap
appears after finitely many subdivisions. This remains true for $T_p$,
although its set of allowed indices is unbounded. The first subdivision
is $(P,i)=((2),1)$ for $D_2$, $(P,i)=(\emptyset,2)$ for $D_q$ with
$q\geq3$, and $(P,i)=(\emptyset,p)$ for $T_p$.

When a gap is removed, all gaps separating its construction interval
from the rest of the construction have already been removed, while
no gap inside that interval has yet been removed. Thus, at this stage,
the construction interval is exactly the component containing the gap.
Its left and right presentation bridges are therefore precisely the
left and right natural retained intervals.
We call this the \emph{natural presentation} and usually suppress it from the notation
$\tau_L,\tau_R$. In particular, the estimates in
Lemma~\ref{lem:E-estimates} give
\begin{equation}\label{eq:E-lateral}
	\tau_L(D_q)\geq c_q,\qquad
	\tau_R(D_q)\geq2c_q.
\end{equation}

The next proposition gives the two half-line forms needed below. Its
local alternative follows the argument of Proposition~4.1 in Cabrelli--Hare--Molter
\cite{chm}. Its lateral alternative follows Moreira's
presentation method in Section II, \cite{moreira}. We prove the
presentation-based version used here. The proof also makes the
unbounded setting and the non-strict local inequalities explicit.

\begin{prop}\label{prop:half-line}
Let $E$ be closed, perfect, and nowhere dense, with
$\conv(E)=[a,+\infty)$ and uniformly bounded gap lengths. Let $K$ be a
compact Cantor set whose convex hull meets $[a,+\infty)$, and assume that
$K$ is not contained in a bounded gap of $E$. Either of the following
conditions implies 
\[
E\cap K\ne\varnothing.
\]
\begin{enumerate}[label=\textup{(\alph*)},leftmargin=2em]
\item For every Newhouse bridge--gap pair $(B_E,G_E)$ of $E$ and
      $(B_K,G_K)$ of $K$,
      \[
       |B_E|\geq|G_K|\quad\text{or}\quad |B_K|\geq|G_E|.
      \]
\item There are presentations for which
      \[
       \tau_R(E)\tau_L(K)>1,\qquad
       \tau_L(E)\tau_R(K)>1.
      \]
\end{enumerate}
\end{prop}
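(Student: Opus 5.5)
For both alternatives I will argue by contradiction and build a nested sequence. Suppose $E\cap K=\varnothing$. The plan is to produce, inductively, a bounded gap $G_E^{(n)}$ of $E$ and a bounded gap $G_K^{(n)}$ of $K$ that are \emph{linked}: each contains exactly one endpoint of the other, so they overlap without one containing the other. Each new pair will have one strictly smaller gap. Compactness together with the nowhere density of the limit will then give a contradiction.

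\emph{Starting configuration.} Since $K$ is compact and disjoint from the closed set $E$, the number $\max K$ lies in some gap of $E$ or to the left of $a$. The same holds for $\min K$. Because $\conv(K)$ meets $[a,\infty)$ and $K$ is not inside a single bounded gap of $E$, some point of $E$ lies in $\conv(K)$. Since $E\cap K=\varnothing$, that point lies in a bounded gap $G_K$ of $K$. If $\min K<a$ and $a\in G_K$, the configuration is degenerate; I will treat it by replacing the half-line $(-\infty,a)$ with a fictitious unbounded gap of $E$, and the unbounded bridge $[a,\infty)$ covers it. Now take the point $e\in E\cap G_K$ and the bridge of $E$ on one side of $e$. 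Because $E\cap K=\varnothing$, the endpoints of $G_K$ lie in gaps of $E$. Choose the largest gap of $E$ inside $G_K$, or a gap containing an endpoint of $G_K$. This produces a linked or nested pair. The nested case (a gap of one set lying inside a gap of the other) must be ruled out or reduced, and I will handle it the standard way: when a gap $G_E$ sits inside $G_K$, pass to the endpoint of $G_K$ and to the gap of $E$ containing that endpoint. The uniform bound on the gaps of $E$ keeps these gaps bounded.

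\emph{Inductive step, case (a).} Let $G_E=(a_1,b_1)$ and $G_K=(a_2,b_2)$ be linked, with $a_1<a_2<b_1<b_2$. Consider the Newhouse bridges $B_R^E(G_E)$, which starts at $b_1\in G_K$, and $B_L^K(G_K)$, which ends at $a_2\in G_E$. Hypothesis (a) gives either $|B_R^E|\ge|G_K|$ or $|B_L^K|\ge|G_E|$. In the first case the bridge reaches $b_2$ or beyond, and $b_2\notin E$. So $b_2$ lies in a gap $G'$ of $E$ with $G'\subset B_R^E$ or just past the bridge. In the non-strict case $|B_R^E|=|G_K|$, the point $b_2$ is the bridge endpoint. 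That endpoint lies in $E$, which contradicts $E\cap K=\varnothing$. This is exactly where non-strictness is harmless. A gap strictly inside the bridge has $|G'|<|G_E|$. If instead $b_2$ lies in the gap that terminates the bridge, that gap has length at least $|G_E|$ but lies beyond $b_2$'s side. In either event the new pair $(G',G_K)$ is linked with $|G'|<|G_E|$, or else we obtain the symmetric replacement on $K$'s side. The second case is symmetric. Iterating, the lengths in at least one coordinate strictly decrease along a subsequence of gaps drawn from finitely many levels, so they tend to $0$. The nested linked intervals then shrink to a common point. Each endpoint of these gaps is a point of $E$ or of $K$, so that point is a limit of points of both closed sets and lies in $E\cap K$, which is a contradiction.

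\emph{Case (b)} follows Moreira's argument. Using the presentations, index the linked pair by the stages $(n,m)$ at which the gaps are removed. With $L_n,U_n,R_n$ for $E$ and $L'_m,U'_m,R'_m$ for $K$, the products $\tau_R(E)\tau_L(K)>1$ and $\tau_L(E)\tau_R(K)>1$ show that $|R_n|\ge|U'_m|$ or $|L'_m|\ge|U_n|$, with strict inequalities. Since $R_n$ is a component of the complement of earlier gaps, the endpoint of $U'_m$ inside $R_n$ lies in a later-removed gap of $E$. This advances the stage index strictly. Because every gap is eventually removed and gap lengths along a presentation tend to zero (the set is nowhere dense), the same limiting argument applies. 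The main obstacle I expect is the bookkeeping in case (a): I must verify that the replacement gap is always strictly smaller, or lies at a strictly later stage, so that the sequence does not cycle, and I must handle the unbounded side of $E$. I would first check this through the ordering ``$|G'|<|G_E|$ or $G'$ is a later-presentation gap'', combined with the finiteness of the gaps above any length on compact pieces.
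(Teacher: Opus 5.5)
Your overall strategy matches the paper's. You assume $E\cap K=\varnothing$, work with linked pairs of bounded gaps, and use the bridge-crossing step. That step keeps one gap and replaces the other by a strictly shorter gap in (a), or by a gap of strictly larger presentation index in (b). The genuine gap is in the termination, which is the step that actually produces the contradiction.

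\emph{What fails in your termination step.} ``Strictly decrease \ldots\ so they tend to $0$'' is a non sequitur, and ``finitely many levels'' has no meaning for Newhouse bridges. The linked pairs are not nested either. Replacing $G_E$ by the gap $G'\ni b_2$ moves to the right of $G_E$, and if from some point on only one coordinate is replaced, the other gap stays fixed. So nothing ``shrinks to a common point.'' In (b), the claim ``gap lengths along a presentation tend to zero (the set is nowhere dense)'' is false for unbounded $E$. For $E=D_q$, the very set the proposition is applied to, the gaps $(U_r,V_{r,q})$ have lengths $\Delta_{r,q}$ increasing in $r$. Hence every enumeration contains infinitely many gaps of length at least $\Delta_{0,q}$.

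\emph{The missing idea.} Since $E$ is closed, $K$ is compact and the two are disjoint, $\delta=\inf\{|e-k|:e\in E,\,k\in K\}>0$.
\begin{itemize}
\item In a linked pair the two inner endpoints lie in different sets and in both gaps, so both gaps are longer than $\delta$.
\item Each such gap meets $\conv(K)$. With the uniform bound $D$ on the gaps of $E$, all of them lie in $[\min K-D,\max K+D]$.
\item Distinct gaps of one set are disjoint, so only finitely many candidates remain and the family of linked pairs is finite.
\end{itemize}
Now choose a pair minimizing $|U|+|V|$ in (a), or maximizing the index sum in (b). The replacement step contradicts this extremal choice directly, with no limit argument. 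The same two facts would also repair your limit version: infinitely many distinct gaps in a bounded window have lengths tending to $0$, which forces inner endpoints of $E$ and $K$ closer than $\delta$.

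\emph{Two smaller corrections.}
\begin{itemize}
\item Your treatment of the non-strict case is wrong. With $a_1<a_2<b_1<b_2$ one has $b_2-b_1<b_2-a_2=|G_K|\le|B_R^E|$. So $b_2$ is always strictly interior to the bridge and never its endpoint. This strict inequality, forced by linkage, is the real reason equality in (a) is harmless.
\item Your starting step needs no fictitious gap and no nested-case reduction. Because $\conv(K)$ meets $[a,+\infty)$ and $a\in E$, the point $\max K>a$ lies in a bounded gap $U=(u_-,u_+)$ of $E$. Then $K\not\subset U$ gives a point of $K$ below $u_-$. Hence $u_-$ lies in a gap $V=(v_-,v_+)$ of $K$ with $v_-<u_-<v_+<u_+$, which is already a linked pair.
\end{itemize}
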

\begin{proof}
Suppose, to the contrary, that $E\cap K=\varnothing$. Put
$\alpha=\min K$ and $\beta=\max K$. Since $E$ is closed and $K$
is compact, their distance is positive:
\[
 \delta:=\inf\{|e-k|:e\in E,\ k\in K\}>0.
\]

\smallskip
\noindent\emph{Step 1: A nonempty finite family of linked gap pairs.}
For bounded gaps $U=(u_-,u_+)$ of $E$ and $V=(v_-,v_+)$ of $K$,
we say that $(U,V)$ is linked if its endpoints have one of the
following two orders:
\[
 \begin{array}{ll}
 \textup{(I)}  & u_-<v_-<u_+<v_+,\\
 \textup{(II)} & v_-<u_-<v_+<u_+.
 \end{array}
\]
Let $\mathcal F$ be the family of all such linked pairs.

We first show that $\mathcal F\ne\varnothing$. The assumption on
the convex hulls gives $\beta\geq a$. Since $a\in E$, equality
would contradict $E\cap K=\varnothing$; hence $\beta>a$.
Consequently, $\beta$ belongs to a bounded gap $U=(u_-,u_+)$ of $E$.
Here the gap is bounded because $E$ is bounded below and unbounded
above. Since $K$ is not contained in $U$, and $\beta<u_+$ is the
largest point of $K$, there exists $k\in K$ with $k\leq u_-$.
Equality is impossible because $u_-\in E$, so
\[
 \alpha\leq k<u_-<\beta.
\]
Thus $u_-\notin K$ lies in a bounded gap $V=(v_-,v_+)$ of $K$.
As $v_+\leq\beta<u_+$, we have
\[
 v_-<u_-<v_+<u_+,
\]
which gives a member of $\mathcal F$.

Next, choose $D>0$ such that every bounded gap of $E$ has length
at most $D$. For every $(U,V)\in\mathcal F$, the two inner endpoints
belong to different sets, and hence
\[
 \begin{aligned}
  &\delta\leq u_+-v_-<\min\{|U|,|V|\}
       &&\text{in case (I)},\\
  &\delta\leq v_+-u_-<\min\{|U|,|V|\}
       &&\text{in case (II)}.
 \end{aligned}
\]
In particular, both gaps have length greater than $\delta$.
Moreover, $V\subset[\alpha,\beta]$ and $U\cap V\ne\varnothing$,
so
\[
 U\subset[\alpha-D,\beta+D].
\]
Distinct gaps of either set are disjoint. A bounded interval
contains only finitely many pairwise disjoint intervals of length
greater than $\delta$. Therefore only finitely many gaps of either
set can occur in a linked pair, and $\mathcal F$ is finite.

\smallskip
\noindent\emph{Step 2: The bridge-crossing observation.}
The following observation applies both to Newhouse bridges and to
presentation bridges. First suppose that $(U,V)$ has orientation
\textup{(I)}.

Let $J_E$ be a right bridge of $U$. If $|J_E|\geq|V|$, then
\[
 0<v_+-u_+<|V|\leq|J_E|,
\]
so $v_+$ lies strictly inside $J_E$. Since
$J_E\subset\conv(E)$ and $v_+\notin E$, there is a bounded gap
$U'=(u'_-,u'_+)$ of $E$ containing $v_+$. The point $u_+\in E$
prevents $U'$ from extending to the left of $u_+$, and hence
\[
 v_-<u_+\leq u'_-<v_+<u'_+.
\]
Thus $(U',V)$ is again linked.

Similarly, let $J_K$ be a left bridge of $V$. If $|J_K|\geq|U|$,
then
\[
 0<v_--u_-<|U|\leq|J_K|,
\]
so $u_-$ lies strictly inside $J_K\subset[\alpha,\beta]$.
Since $u_-\notin K$, it belongs to a bounded gap
$V'=(v'_-,v'_+)$ of $K$. The point $v_-\in K$ gives
\[
 v'_-<u_-<v'_+\leq v_-<u_+,
\]
and $(U,V')$ is linked.

Interchanging the roles of $E$ and $K$ gives the corresponding
statements in orientation \textup{(II)}: a left bridge of $U$ of
length at least $|V|$ replaces $U$ by the gap containing $v_-$,
whereas a right bridge of $V$ of length at least $|U|$ replaces $V$
by the gap containing $u_+$. In both cases the new pair is linked.

In every replacement, the new gap contains a point in the interior
of the chosen bridge of the old gap. If this is a Newhouse bridge,
the new gap is strictly shorter than the old one, by the definition
of that bridge. If it is a presentation bridge of a gap numbered
$n$, then no gap numbered at most $n$ meets its interior. Therefore
the new gap has presentation index strictly greater than $n$.
Notice also that the endpoint being crossed lies strictly inside
the bridge even when the bridge length equals the other gap length.
Thus non-strict bridge-length inequalities suffice for this
observation.

\smallskip
\noindent\emph{Step 3: Proof under condition \textup{(a)}.}
Choose $(U,V)\in\mathcal F$ for which $|U|+|V|$ is minimal.
In orientation \textup{(I)}, condition \textup{(a)}, applied to the
right Newhouse bridge of $U$ and the left Newhouse bridge of $V$,
gives
\[
 |B_R^E(U)|\geq|V|
 \quad\text{or}\quad
 |B_L^K(V)|\geq|U|.
\]
In orientation \textup{(II)}, apply the same condition to the
left Newhouse bridge of $U$ and the right Newhouse bridge of $V$:
\[
 |B_L^E(U)|\geq|V|
 \quad\text{or}\quad
 |B_R^K(V)|\geq|U|.
\]
In either orientation, the observation in Step~2 produces another
member of $\mathcal F$ by keeping one gap and replacing the other
by a strictly shorter gap. This contradicts the minimality of
$|U|+|V|$, proving the conclusion under \textup{(a)}.

\smallskip
\noindent\emph{Step 4: Proof under condition \textup{(b)}.}
Fix presentations satisfying \textup{(b)}. For a gap $U$ of $E$
and a gap $V$ of $K$, denote their presentation indices by
$\iota_E(U)$ and $\iota_K(V)$, respectively. Choose
$(U,V)\in\mathcal F$ maximizing
\[
 \iota_E(U)+\iota_K(V).
\]
Write $L_U,R_U$ for the left and right presentation bridges of $U$,
and $L_V,R_V$ for those of $V$.

In orientation \textup{(I)}, the definition of lateral thickness
gives
\[
 \frac{|R_U|}{|U|}\frac{|L_V|}{|V|}
 \geq\tau_R(E)\tau_L(K)>1.
\]
It follows that
\[
 |R_U|>|V|\quad\text{or}\quad |L_V|>|U|.
\]
In orientation \textup{(II)}, the other thickness inequality gives
\[
 \frac{|L_U|}{|U|}\frac{|R_V|}{|V|}
 \geq\tau_L(E)\tau_R(K)>1,
\]
and hence
\[
 |L_U|>|V|\quad\text{or}\quad |R_V|>|U|.
\]
Thus, in either orientation, Step~2 produces another linked pair
by replacing exactly one of its gaps by a gap with strictly larger
presentation index. The other gap is unchanged, contradicting the
maximality of $\iota_E(U)+\iota_K(V)$.

The assumption $E\cap K=\varnothing$ is therefore impossible under
either \textup{(a)} or \textup{(b)}. Hence $E\cap K\ne\varnothing$.
\end{proof}
\section{The case \texorpdfstring{$p\geq q+1$}{p >= q+1}}\label{sec:non-diagonal}

\begin{proof}[Proof of Theorem~\ref{thm:main-negative}]
We use the gaps $(U_r,V_{r,q})$ in~\eqref{eq:spine-gaps}. Since
$F_m(q)-1\geq F_2(q)-1=(2^q-1)^{-1}$ for $m\geq2$,
\begin{equation}\label{eq:gap-lower-bound}
 \Delta_{r,q}\geq\frac1{2^q-1}\sum_{m\geq2}\frac{\Lambda_{P_r}(m)}m
 =\frac{h_{r+1}-h_r}{2^q-1}.
\end{equation}
The height-one formula gives
\[
 h_{r+1}-h_r=(r+2)\zeta(r+3)-(r+1)\zeta(r+2)\longrightarrow1.
\]
On the other hand, Lemma \ref{lem:product} at $s=q+1$
gives
\[
 A_{q+1}-1<\frac1{2^q-1}.
\]
Choose $r$ sufficiently large that
$h_{r+1}-h_r>(2^q-1)(A_{q+1}-1)$. Since $A_s$ decreases with $s$, for every
$p\geq q+1$ we obtain
\[
 \Delta_{r,q}>A_{q+1}-1\geq A_p-1=\diam(T_p).
\]
Consequently, the interval
\[
\bigl(U_r+A_p,\,V_{r,q}+1\bigr)
\]
is nonempty, and
\begin{equation}\label{eq:omitted-upper-lower}
	\bigl(U_r+A_p,\,V_{r,q}+1\bigr)\cap(D_q+T_p)=\varnothing.
\end{equation}
Indeed, a point of $D_q$ is either at most $U_r$ or at least $V_{r,q}$,
and $T_p\subset[1,A_p]$. Moreover, $U_r,V_{r,q}\in D_q$ and
$1,A_p\in T_p$, so $U_r+A_p,\ V_{r,q}+1\in D_q+T_p$.
Hence $D_q+T_p$ is not an interval.
\end{proof}

\begin{rem}
Lemma~\ref{lem:E-estimates} gives $G_{D,q}<A_q-1$ for every gap.
Thus the diameter obstruction above cannot prove the diagonal case
$p=q$. A bridge argument is needed there.
\end{rem}

\section{The cases \texorpdfstring{$2\leq p\leq q$}{2 <= p <= q}}\label{sec:positive-range}

\subsection{The diagonal case \texorpdfstring{$p=q=2$}{p = q = 2}}\label{subsec:q2}
The identity $F_m(2)=2m/(m+1)$ makes the local geometry explicit.
By Lemma~\ref{lem:E-estimates}, every gap of $D_2$ satisfies
\begin{equation}\label{eq:E2-local}
 M_{D,2}=G_{D,2},\qquad N_{D,2}\geq2G_{D,2},\qquad
 |B_\sigma^{D_2}(G)|\geq|G|.
\end{equation}
For $T_2$ there is exactly one exceptional Newhouse bridge.

\begin{lem}\label{lem:T2-local}
	The initial gap is
	\[
	\mathcal G_0=(2\zeta(2)-2,\zeta(2)).
	\]
	Its left Newhouse bridge is
	\[
	\mathcal B_0=[1,2\zeta(2)-2],
	\]
	while its right Newhouse bridge is $[\zeta(2),2]$. Moreover,
	\begin{equation}\label{eq:exceptional-lengths}
		|\mathcal G_0|=2-\zeta(2),\qquad
		|\mathcal B_0|=2\zeta(2)-3,\qquad
		\frac14<|\mathcal B_0|<|\mathcal G_0|.
	\end{equation}
	The gap $\mathcal G_0$ is uniquely largest in $T_2$. Every Newhouse
	bridge--gap pair other than $(\mathcal B_0,\mathcal G_0)$ satisfies
	$|B|\geq|G|$.
\end{lem}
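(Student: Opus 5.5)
The plan is to read everything off the natural subdivision of $T_2$ and the formulas \eqref{eq:T-M}--\eqref{eq:T-N} with $F_m(2)=2m/(m+1)$, that is,
\[
\frac{F_m(2)}{m}=\frac{2}{m+1},\qquad 1-\frac{F_m(2)}{m}=\frac{m-1}{m+1},\qquad F_m(2)-1=\frac{m-1}{m+1}.
\]
Two consequences follow at every natural subdivision $(P,i)$ of $T_2$. First, $N_{T,2}=G_{T,2}$ exactly. Second, $M_{T,2}\geq G_{T,2}$ holds as soon as the weights $\Lambda_P(m)m^{-i}$ are concentrated enough on small $m$. The comparison is termwise: $2/(m+1)\geq (m-1)/(m+1)$ exactly when $m\leq 3$.

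First I will treat the initial level $P=\emptyset$, $i=2$. The gap is $(\eta(3,\{2\}^\infty),\zeta^\star(2))$. By \eqref{eq:tailformula} with the empty prefix,
\[
\eta(3,\{2\}^\infty)=\sum_m m^{-3}\cdot\frac{2m}{m+1}=2\sum_m\frac{1}{m^2(m+1)}=2(\zeta(2)-1)+\text{(telescoping)}.
\]
Computing this gives $2\zeta(2)-2$. Hence $|\mathcal G_0|=2-\zeta(2)$, and the left retained interval is $[1,2\zeta(2)-2]$, of length $2\zeta(2)-3\approx0.2899$. The right retained interval is $[\zeta(2),\eta(2,\{2\}^\infty)]=[\zeta(2),2]$. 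The numerical inequalities $\tfrac14<2\zeta(2)-3<2-\zeta(2)\approx0.355$ are immediate.

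Second, I will show that every other natural gap has length less than $2\zeta(2)-3$. This is what makes $\mathcal G_0$ uniquely largest. It also forces its Newhouse bridges to be the full retained intervals, because no other gap is long enough to stop them, and $\conv(T_2)=[1,2]$.

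For the initial-level gaps $(\emptyset,i)$ with $i\geq3$, the length is $\sum_{m\geq2}m^{-i}(m-1)/(m+1)$. This is at most the $i=3$ value, which is small, roughly $0.13$. For gaps with a nonempty prefix $P$ with $k_1\geq2$, I bound $\Lambda_P(m)m^{-i}$ by $m^{-2}\cdot m^{-2}$-type weights. The gap is then at most $\sum_{m\geq2}\Lambda_{(2)}(m)m^{-2}\frac{m-1}{m+1}$, which is again well below $1/4$.

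Third, I will verify the bridge inequality $|B|\geq|G|$ for all other pairs. I use the bridge comparison principle stated before \eqref{eq:E-Newhouse}: if both retained intervals at a subdivision are at least $c$ times its gap, then every Newhouse bridge is at least $c|G|$ long. The right retained intervals always satisfy $N=G$. For the left retained intervals I need $M_{T,2}(P,i)\geq G_{T,2}(P,i)$ at every subdivision except the initial one. The difference is
\[
M-G=\sum_{m\geq2}\Lambda_P(m)m^{-i}\,\frac{3-m}{m+1}.
\]
The term at $m=2$ is positive and outweighs all terms with $m\geq4$. For $P=\emptyset$ this requires $i\geq3$, where the $m^{-i}$ decay is enough; at $i=2$ it fails, and that failure is exactly the exceptional case. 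For nonempty $P$ I use the monotonicity of $\Lambda_P$ and the decay $\Lambda_P(m)\lesssim m^{1-k_1}$ together with $i\geq2$.

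One subtlety remains: a bridge stopped by a later gap $G'$ uses that gap's retained interval on the side facing $G$. If that is a left retained interval, I need its inequality with $G'$, and this holds because $G'\neq\mathcal G_0$. The only remaining case is a bridge of some gap that runs into $\mathcal G_0$ itself. But $\mathcal G_0$ is longer than everything else, so that bridge simply contains the whole retained interval of its own gap, and the estimate still holds.

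The main obstacle will be the uniform inequality $\sum_m\Lambda_P(m)m^{-i}(3-m)/(m+1)\geq0$ for all nonempty prefixes and all $i\geq2$. I will try first to reduce it to the worst case $i=2$ with $P=(2)$, using $\Lambda_P(m)/\Lambda_P(2)$ non-increasing. Failing that, I will use the sharper ratio bound $\Lambda_P(m)\leq\Lambda_P(2)$ combined with the explicit tail $\sum_{m\geq4}m^{-2}(m-3)/(m+1)$ against $\tfrac1{12}$.
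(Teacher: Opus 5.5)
Your architecture is the paper's. You use the identities $N_{T,2}=G_{T,2}$ and $M_{T,2}-G_{T,2}=\sum_{m}\Lambda_P(m)m^{-i}(3-m)/(m+1)$, the explicit computation at $(\emptyset,2)$, a comparison showing every other gap is shorter, and the bridge comparison in which stopping gaps are noninitial descendants. The gap is exactly at the step you flag as the main obstacle: $M_{T,2}(P,2)\geq G_{T,2}(P,2)$ for nonempty $P$. Neither of your two routes closes it.

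The fallback is numerically false. Using only $\Lambda_P(m)\le\Lambda_P(2)$, you would need
\[
 \sum_{m\geq4}\frac{m-3}{m^2(m+1)}=\frac{61}{12}-3\zeta(2)\approx0.149
\]
to be at most $\tfrac1{12}$, and it is not. The primary route (reduce to $P=(2)$) names the right target but gives no mechanism for reaching it. That $m\mapsto\Lambda_P(m)/\Lambda_P(2)$ is non-increasing is trivial and says nothing about how this profile compares with that of $(2)$. The asymptotic decay $\Lambda_P(m)\lesssim m^{1-k_1}$ carries no constant measured against $\Lambda_P(2)$, and that constant is exactly what the $m=2$ term $\Lambda_P(2)/12$ has to beat.

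The missing idea is a uniform normalized tail bound such as $\Lambda_P(m)\leq 2\Lambda_P(2)/m$. With it, $M_{T,2}(P,2)-G_{T,2}(P,2)\geq\Lambda_P(2)\bigl(\tfrac1{12}-2S\bigr)>0$, where $S=\sum_{m\geq4}\frac{m-3}{m^3(m+1)}<\tfrac1{24}$. The paper obtains this bound as follows:
\begin{itemize}
\item Write $\Lambda_P(m)=\sum_{n\geq m}a_n$ with $a_n=n^{-k_r}\Lambda_{(k_1,\ldots,k_{r-1})}(n)$.
\item Since $k_r\geq2$ and $\Lambda_{(k_1,\ldots,k_{r-1})}$ is non-increasing, $n^2a_n$ is non-increasing.
\item Sum the inequalities $a_nj^{-2}\leq a_jn^{-2}$ over $2\leq j<m\leq n$ and add a common term to both sides. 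This gives
\[
 \frac{\Lambda_P(m)}{\Lambda_P(2)}\leq\frac{\sum_{n\geq m}n^{-2}}{\sum_{n\geq2}n^{-2}}\leq\frac2m .
\]
\end{itemize}
This is precisely the statement that $(2)$ is the worst prefix. The case $i\geq3$ then needs only $\Lambda_P(m)\le\Lambda_P(2)$ and the extra decay $m^{-(i-3)}\le4^{-(i-3)}$ for $m\ge4$, as in your empty-prefix argument.

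A smaller point: your claim that all other gaps are shorter than $2\zeta(2)-3$ is true (they are all below $\zeta(2)-\tfrac32$). However, it rests on $\Lambda_P(m)\leq\Lambda_{(2)}(m)$ for $m\geq2$, which you should derive. For example, it follows from $\Lambda_{(k_1,\ldots,k_{r-1})}(n)\leq1$ for $n\geq2$.
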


\begin{proof}
For $i\geq2$, formulas~\eqref{eq:T-M}--\eqref{eq:T-N} become
\begin{align}
 G_{T,2}(P,i)=N_{T,2}(P,i)
 &=\sum_{m\geq2}\Lambda_P(m)\frac{m-1}{m^i(m+1)},
 \label{eq:T2-G}\\
 M_{T,2}(P,i)-G_{T,2}(P,i)
 &=\sum_{m\geq2}\Lambda_P(m)\frac{3-m}{m^i(m+1)}.
 \label{eq:T2-difference}
\end{align}
At $(P,i)=(\emptyset,2)$, partial fractions give the two length
formulas in~\eqref{eq:exceptional-lengths}. The inequalities follow from the
elementary bounds $13/8<\zeta(2)<5/3$, obtainable by finite sums and
integral comparison.

We show that every other left retained interval is longer than its gap.
Set
\[
 S:=\sum_{m=4}^{\infty}\frac{m-3}{m^3(m+1)}
 <\frac1{320}+\frac1{125}+\frac1{50}<\frac1{24}.
\]
Here the first term was separated, and the tail was bounded by
\[\sum_{m\geq5}m^{-3}\leq5^{-3}+\int_5^\infty x^{-3}\,dx.\]
If $i\geq3$, monotonicity of $\Lambda_P(m)$ in
\eqref{eq:T2-difference} gives
\[
\begin{aligned}
	M_{T,2}(P,i)-G_{T,2}(P,i)
	&=\frac{\Lambda_P(2)}{3\cdot 2^i}
	-\sum_{m\geq4}\Lambda_P(m)
	\frac{m-3}{m^i(m+1)}\\
	&\geq \frac{\Lambda_P(2)}{3\cdot 2^i}
	-\Lambda_P(2)4^{-(i-3)}
	\sum_{m\geq4}\frac{m-3}{m^3(m+1)}\\
	&=\Lambda_P(2)\left(
	\frac{1}{3\cdot 2^i}-4^{-(i-3)}S
	\right)>0.
\end{aligned}
\]

It remains to treat $i=2$ and $P\ne\emptyset$. We use the tail bound
\begin{equation}\label{eq:lambda-tail}
 \frac{\Lambda_P(m)}{\Lambda_P(2)}\leq\frac2m\qquad(m\geq2).
\end{equation}
To prove it, write
\[
 \Lambda_P(m)=\sum_{n\geq m}a_n,\qquad
 a_n=n^{-k_r}\Lambda_{(k_1,\ldots,k_{r-1})}(n).
\]
Since $k_r\geq2$ and
$\Lambda_{(k_1,\ldots,k_{r-1})}(n)$ is positive and
non-increasing in $n$, the sequence
\[
 \frac{a_n}{n^{-2}}
 =n^{2-k_r}\Lambda_{(k_1,\ldots,k_{r-1})}(n)
\]
is non-increasing. Here, when $r=1$, we use the convention
$\Lambda_{\emptyset}(n)=1$. In particular,
\[
 0<a_n\leq 4a_2\,n^{-2}\qquad(n\geq2),
\]
so all the series below converge.

If $m=2$, both sides of the desired inequality are equal to $1$.
Now let $m\geq3$. For every pair of integers $j,n$ with
$2\leq j<m\leq n$, monotonicity gives
\[
 \frac{a_n}{n^{-2}}\leq\frac{a_j}{j^{-2}},
 \qquad\Leftrightarrow\qquad
 a_nj^{-2}\leq a_jn^{-2}.
\]
Summing over $2\leq j<m$ and $n\geq m$, we obtain
\[
 \left(\sum_{n\geq m}a_n\right)
 \left(\sum_{j=2}^{m-1}j^{-2}\right)
 \leq
 \left(\sum_{n\geq m}n^{-2}\right)
 \left(\sum_{j=2}^{m-1}a_j\right).
\]
Adding
\[
 \left(\sum_{n\geq m}a_n\right)
 \left(\sum_{n\geq m}n^{-2}\right)
\]
to both sides yields
\[
 \left(\sum_{n\geq m}a_n\right)
 \left(\sum_{n\geq2}n^{-2}\right)
 \leq
 \left(\sum_{n\geq m}n^{-2}\right)
 \left(\sum_{n\geq2}a_n\right).
\]
It follows that
\[
 \frac{\sum_{n\geq m}a_n}{\sum_{n\geq2}a_n}
 \leq
 \frac{\sum_{n\geq m}n^{-2}}{\sum_{n\geq2}n^{-2}}.
\]
The denominator exceeds $3/5$, while telescoping gives
\[
 \sum_{n\geq m}n^{-2}
 <\sum_{n\geq m}\left(\frac1{n-1/2}-\frac1{n+1/2}\right)
 =\frac1{m-1/2}.
\]
For $m\geq3$ the last ratio $\frac{\sum_{n\geq m}n^{-2}}{\sum_{n\geq2}n^{-2}}$ is at most $2/m$. The case $m=2$ is
immediate. This proves~\eqref{eq:lambda-tail}, and hence
\[
\begin{aligned}
	M_{T,2}(P,2)-G_{T,2}(P,2)
	&=\frac{\Lambda_P(2)}{12}
	-\sum_{m\geq4}\Lambda_P(m)
	\frac{m-3}{m^2(m+1)}\\
	&\geq \frac{\Lambda_P(2)}{12}
	-2\Lambda_P(2)\sum_{m\geq4}
	\frac{m-3}{m^3(m+1)}\\
	&=\Lambda_P(2)\left(\frac1{12}-2S\right)>0.
\end{aligned}
\]

Finally, if $P$ has positive length $r$, then for $m\geq2$,
\[
 \Lambda_P(m)\leq\prod_{j=1}^{r}\sum_{n\geq m}n^{-k_j}
 \leq(\zeta(2)-1)^r<1.
\]
Thus~\eqref{eq:T2-G} is strictly smaller than its value at
$(\emptyset,2)$. The same is true for an empty prefix and $i>2$.
The initial gap is therefore uniquely largest, and its Newhouse
bridges extend to $1$ and $2$. For every other gap, both retained
intervals are at least as long as the gap. The bridge comparison
preceding~\eqref{eq:E-Newhouse} applies: any gap stopping a bridge
inside such a retained interval is a descendant and is also
noninitial. This proves the last assertion.
\end{proof}

\begin{lem}\label{lem:E2-spectrum}
Put $\delta=2\zeta(3)-2$. Every gap $G$ of $D_2$ satisfies
\begin{equation}\label{eq:E2-spectrum}
 |G|<\frac14\quad\text{or}\quad
 |G|\geq\delta>|\mathcal G_0|.
\end{equation}
\end{lem}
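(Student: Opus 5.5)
The plan is to use that, for $q=2$, the natural gaps of $D_2$ are exactly the gaps $\mathcal G_{D,2}(P,1)$ with $P$ a nonempty admissible prefix: the empty prefix would need $2\le i\le q-1=1$, which is impossible. By~\eqref{eq:E-G} and $F_m(2)=2m/(m+1)$ from Lemma~\ref{lem:product}, their lengths are
\[
 G_{D,2}(P,1)=\sum_{m\geq2}\Lambda_P(m)\,\frac{m-1}{m(m+1)}.
\]
I then split according to Lemma~\ref{lem:prefix}: either $P=P_r=(2,\{1\}^r)$ for some $r\geq0$, or $P$ is some other prefix.

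\emph{Spine prefixes.} For $P=P_r$ the gap is $(U_r,V_{r,2})$, of length $\Delta_{r,2}$. By~\eqref{eq:increasing-gaps} it suffices to show $\Delta_{0,2}=\delta$. I will compute this exactly. Write $\Lambda_{P_0}(m)=\sum_{n\geq m}n^{-2}$ and interchange the order of summation (all terms are nonnegative). The inner sum telescopes via the partial fraction identity
\[
 \frac{m-1}{m(m+1)}=\frac{2}{m+1}-\frac1m,
\]
which gives
\[
 \sum_{m=2}^{n}\frac{m-1}{m(m+1)}=H_n-2+\frac{2}{n+1},
\]
a quantity that vanishes at $n=1$. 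Hence
\[
 \Delta_{0,2}=\sum_{n\geq1}\frac{H_n}{n^2}-2\zeta(2)+2\sum_{n\geq1}\frac1{n^2(n+1)}.
\]
Euler's identity $\sum_n H_n/n^2=2\zeta(3)$ (equivalently $h_1=2\zeta(3)$ from~\eqref{eq:height-one}) and $\sum_{n\ge1}1/(n^2(n+1))=\zeta(2)-1$ then give $\Delta_{0,2}=2\zeta(3)-2=\delta$. Consequently every spine gap satisfies $|G|\geq\delta$.

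\emph{Other prefixes.} Here~\eqref{eq:prefix-small} gives $\Lambda_P(m)\leq 1/(m(m-1))$. Therefore
\[
 |G|\leq\sum_{m\geq2}\frac{1}{m^2(m+1)}=\zeta(2)-1-\frac12=\zeta(2)-\frac32<\frac53-\frac32=\frac16<\frac14.
\]

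\emph{Comparison with $\mathcal G_0$.} It remains to check $\delta>|\mathcal G_0|=2-\zeta(2)$, i.e.\ $2\zeta(3)+\zeta(2)>4$. This follows from $\zeta(3)>1+\tfrac18+\tfrac1{27}>1.2$ together with $\zeta(2)>13/8$, the latter already used in Lemma~\ref{lem:T2-local}.

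The main obstacle is the exact evaluation $\Delta_{0,2}=2\zeta(3)-2$. The crude bound $\Lambda_{P_0}(m)>1/m$ only yields $\Delta_{0,2}>2-\zeta(2)$, which is too weak to separate the spine gaps from $\mathcal G_0$, so the interchange of summation and the harmonic-sum identity are essential.
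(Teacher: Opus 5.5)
Your argument is the paper's own: the same split into spine prefixes $P_r$ and all other prefixes, the same exact evaluation $\Delta_{0,2}=2\zeta(3)-2$ via $H_n$ and $h_1=2\zeta(3)$, and the same bound $\zeta(2)-\frac32<\frac14$ from~\eqref{eq:prefix-small}. All of that is correct.

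\textbf{The gap is in the final comparison} $\delta>|\mathcal G_0|$. The chain $\zeta(3)>1+\frac18+\frac1{27}>1.2$ is false, because $1+\frac18+\frac1{27}=\frac{251}{216}\approx1.162$. The value $\zeta(3)\approx1.202$ does exceed $1.2$, but what you actually proved is only the three-term bound. Combined with $\zeta(2)>\frac{13}{8}$, it yields $2\zeta(3)+\zeta(2)>3.949$, not $>4$. Even the exact value of $\zeta(2)$ would require $\zeta(3)>1.1775$, so no three-term partial sum can close this step. You need a tail estimate, for example
\[
\zeta(3)>1+\frac18+\frac1{27}+\int_4^\infty x^{-3}\,dx=\frac{1031}{864}>\frac{19}{16}.
\]
This gives $\delta>\frac38>2-\zeta(2)$. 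That is exactly how the paper concludes, using $\zeta(3)>19/16$ and $\zeta(2)>13/8$.

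Alternatively, the bound you dismissed at the end repairs the step with no numerics at all. Since $\Lambda_{P_0}(m)>1/m$ strictly, partial fractions give
\[
\delta=\Delta_{0,2}>\sum_{m\geq2}\frac{m-1}{m^2(m+1)}=\sum_{m\geq2}\Bigl(\frac2m-\frac2{m+1}-\frac1{m^2}\Bigr)=2-\zeta(2)=|\mathcal G_0|.
\]
So your closing remark is backwards. The crude bound is exactly strong enough to separate the spine gaps strictly from $\mathcal G_0$. What it cannot do is identify the threshold as $2\zeta(3)-2$, and that is the only place where the exact evaluation is genuinely needed.
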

\begin{proof}
For a prefix $P_r=(2,\{1\}^r)$, the gap has length
$\Delta_{r,2}\geq\Delta_{0,2}$ by~\eqref{eq:increasing-gaps}.
Writing $H_n=\sum_{m=1}^n m^{-1}$ and using~\eqref{eq:height-one},
\[
\begin{aligned}
	\Delta_{0,2}
	&=\sum_{n\geq1}\frac1{n^2}
	\left(H_n+\frac2{n+1}-2\right)\\
	&=2\zeta(3)+2[\zeta(2)-1]-2\zeta(2)=\delta.
\end{aligned}
\]
Thus every gap corresponding to a prefix $(2,\{1\}^r)$ satisfies
$|G|=\Delta_{r,2}\geq\delta$.
The elementary bounds $\zeta(2)>13/8$ and $\zeta(3)>19/16$ give
$|\mathcal G_0|=2-\zeta(2)<3/8<\delta$.

Every other prefix contains a second occurrence of $2$, so
\eqref{eq:prefix-small} gives
\[
 |G|=\sum_{m\geq2}\Lambda_P(m)\frac{m-1}{m(m+1)}
 \leq\sum_{m\geq2}\frac1{m^2(m+1)}
 =\zeta(2)-\frac32<\frac14.
\]
These alternatives exhaust the gaps of $D_2$.
\end{proof}

\begin{prop}\label{prop:q2}
One has $D_2+T_2=[3,+\infty)$.
\end{prop}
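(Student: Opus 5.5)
The inclusion $D_2+T_2\subseteq[3,+\infty)$ is immediate, since $\min D_2=A_2=2$ and $\min T_2=1$. For the reverse inclusion, fix $x\geq3$. We must show $D_2\cap(x-T_2)\ne\varnothing$, and we will apply Proposition~\ref{prop:half-line}(a) with $E=D_2$ and $K=x-T_2$. The set $E$ is closed, perfect and nowhere dense, with $\conv(E)=[2,+\infty)$. Its gaps are uniformly bounded by Lemma~\ref{lem:E-estimates}, since $G_{D,2}<A_2-1=1$.

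The set $K$ is a compact Cantor set with $\conv(K)=[x-2,x-1]$. Because $x-1\geq2$, its hull meets $[2,+\infty)$. We must also check that $K$ is not contained in a bounded gap of $E$. This holds because $\diam K=A_2-1=1$, while every gap of $D_2$ has length $<1$, so $K$ cannot fit inside one.

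Reflection $y\mapsto x-y$ exchanges left and right Newhouse bridges but preserves all lengths. The Newhouse bridge--gap pairs of $K$ therefore have the same length data as those of $T_2$, as described in Lemma~\ref{lem:T2-local}. Since condition (a) concerns arbitrary pairs of both sides, orientation plays no role.

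It remains to verify condition (a): for every pair $(B_E,G_E)$ of $D_2$ and $(B_K,G_K)$ of $K$, we need $|B_E|\geq|G_K|$ or $|B_K|\geq|G_E|$. We split according to the pair from $K$.

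\emph{Case 1: $(B_K,G_K)\ne$ the reflection of $(\mathcal B_0,\mathcal G_0)$.} Lemma~\ref{lem:T2-local} gives $|B_K|\geq|G_K|$.
\begin{itemize}
\item If $|G_K|\leq|G_E|$, then \eqref{eq:E2-local} gives $|B_E|\geq|G_E|\geq|G_K|$.
\item Otherwise $|B_K|\geq|G_K|>|G_E|$.
\end{itemize}
Either way condition (a) holds.

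\emph{Case 2: the exceptional pair, $|B_K|=|\mathcal B_0|>1/4$ and $|G_K|=|\mathcal G_0|$.} Here we use the gap dichotomy of Lemma~\ref{lem:E2-spectrum}.
\begin{itemize}
\item If $|G_E|<1/4$, then $|B_K|>1/4>|G_E|$.
\item If $|G_E|\geq\delta>|\mathcal G_0|$, then \eqref{eq:E2-local} gives $|B_E|\geq|G_E|>|\mathcal G_0|=|G_K|$.
\end{itemize}
So condition (a) holds for every pair. Proposition~\ref{prop:half-line} then yields $E\cap K\ne\varnothing$, that is, $x\in D_2+T_2$.

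The main obstacle is this exceptional pair, where thickness alone fails because $|\mathcal B_0|<|\mathcal G_0|$. It is handled precisely by the spectral gap in Lemma~\ref{lem:E2-spectrum}, which shows that $D_2$ has no gap with length in $[1/4,\delta)$. The remaining points to double-check are that the non-strict inequality $|B_E|\geq|G_E|$ in \eqref{eq:E2-local} holds for every Newhouse bridge, including unbounded right bridges (length $+\infty$), and that the hypotheses on $E$ and $K$ in Proposition~\ref{prop:half-line} are met for every $x\geq3$, including the boundary value $x=3$, where $\max K=2=\min E$ and the intersection is immediate.
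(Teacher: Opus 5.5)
Your proposal is correct and follows essentially the same route as the paper. Both apply Proposition~\ref{prop:half-line}(a) to $D_2$ and the reflected translate $x-T_2$, handle the nonexceptional pairs via $|B|\geq|G|$ on both sides, and treat the exceptional pair $(\mathcal B_0,\mathcal G_0)$ through the gap dichotomy of Lemma~\ref{lem:E2-spectrum}. The only cosmetic difference is that you fold $x=3$ into the same argument, whereas the paper applies the criterion for $x>3$ and notes $3=2+1$ separately.
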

\begin{proof}
We verify the local condition in Proposition~\ref{prop:half-line}(a)
for $D_2$ and $T_2$, and therefore also for every reflected translate
of $T_2$.
For a nonexceptional bridge--gap pair $(B_T,G_T)$ of $T_2$, both
sets satisfy $|B_D|\geq|G_D|$ and $|B_T|\geq|G_T|$.
If $|G_D|\geq|G_T|$, then $|B_D|\geq|G_T|$; otherwise,
$|B_T|\geq|G_T|>|G_D|$. Thus the required alternative holds,
including when the gap lengths coincide.

For the exceptional pair $(\mathcal B_0,\mathcal G_0)$, a gap of
$D_2$ shorter than $1/4$ satisfies $|G_D|<1/4<|\mathcal B_0|$. Every other
gap satisfies
\[
|B_D|\geq|G_D|\geq\delta>|\mathcal G_0|
\]
by Lemma~\ref{lem:E2-spectrum}. This verifies the local condition
for every pair.

Fix $x>3$ and set $K_x=x-T_2$. Its convex hull is $[x-2,x-1]$,
which meets $[2,+\infty)$. Every gap of $D_2$ has length less than
$1=\diam(K_x)$, so $K_x$ cannot be contained in one of those gaps.
The half-line criterion gives $D_2\cap K_x\ne\varnothing$, hence
$x\in D_2+T_2$. Finally, $3=2+1$ belongs to the sum, and no smaller
point does.
\end{proof}

\subsection{The diagonal case \texorpdfstring{$p=q\geq3$}{p = q >= 3}}\label{subsec:qge3}
The natural presentation already gives
$\tau_L(D_q)\geq c_q$ and $\tau_R(D_q)\geq2c_q$.
The complementary bounds for $T_q$ are as follows.

\begin{lem}\label{lem:Tq-lateral}
For $q\geq3$, the natural presentation of $T_q$ satisfies
\begin{equation}\label{eq:T-lateral}
 \tau_L(T_q)\geq\frac12,\qquad
 \tau_R(T_q)\geq\beta_q,
 \qquad
 \beta_q:=\frac{2^q+3^q-1}{(2^q-1)(3^q-1)}.
\end{equation}
Moreover,
\begin{equation}\label{eq:key-product}
 2c_q\beta_q>1.
\end{equation}
\end{lem}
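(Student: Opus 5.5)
Both lateral bounds will be proved termwise in the series \eqref{eq:T-M}--\eqref{eq:T-N} with $p=q$. As observed before \eqref{eq:E-lateral}, the presentation bridges of the natural presentation of $T_q$ are exactly the natural retained intervals, so it suffices to show
\[
M_{T,q}(P,i)\geq\tfrac12 G_{T,q}(P,i),\qquad N_{T,q}(P,i)\geq\beta_q G_{T,q}(P,i)
\]
for every admissible prefix $P$ and every $i\geq q$. Write $w_m=\Lambda_P(m)m^{-i}>0$ for $m\geq2$. The first observation is that
\[
\beta_q=F_3(q)-1=\frac{2^q3^q}{(2^q-1)(3^q-1)}-1 .
\]

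\emph{Right bound.} Here I will show the pointwise inequality $F_m(q)-1\geq\beta_q\bigl(1-F_m(q)/m\bigr)$ for every $m\geq2$, and then sum against $w_m$.
\begin{itemize}[leftmargin=2em]
\item For $m\geq3$ it is immediate: $F_m(q)-1\geq F_3(q)-1=\beta_q$, while $0<1-F_m(q)/m<1$.
\item For $m=2$ one computes $F_2(q)-1=(2^q-1)^{-1}$ and $1-F_2(q)/2=(2^q-2)/(2(2^q-1))$. Their ratio is $1/c_q$, so it remains to check $\beta_q\leq1/c_q$.
\end{itemize}
The inequality $\beta_q\leq1/c_q$ is a routine comparison of the explicit fractions: since $\beta_q<2\cdot 2^{-q}+\ldots$ is of order $2^{-q}$ while $1/c_q\approx 2^{1-q}$, it holds for $q\geq3$.

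\emph{Left bound.} Here pointwise comparison fails for large $m$, because $F_m(q)/m\to0$. I will therefore use the decay of the weights, which comes from $i\geq q\geq3$ and the fact that $\Lambda_P$ is non-increasing. These give
\[
w_m\leq w_2\,(2/m)^i\leq 8\,w_2\,m^{-3}.
\]
Now write
\[
2M-G=\sum_{m\geq2}w_m\Bigl(\frac{3F_m(q)}m-1\Bigr).
\]
Using $F_m(q)\geq1$, the coefficients are bounded as follows:
\begin{itemize}[leftmargin=2em]
\item the $m=2$ coefficient is at least $\tfrac12$;
\item the $m=3$ coefficient is nonnegative;
\item for $m\geq4$ the coefficient is at least $-(1-3/m)$.
\end{itemize}
Hence
\[
2M-G\geq w_2\Bigl(\tfrac12-8\sum_{m\geq4}\frac{m-3}{m^4}\Bigr).
\]
The numerical step is to show that this tail sum is below $1/16$. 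Separating the first few terms and comparing the rest with an integral should suffice, since the sum is roughly $0.03$.

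\emph{Key product.} Inequality \eqref{eq:key-product} reads
\[
(2^q-2)(2^q+3^q-1)>(2^q-1)(3^q-1).
\]
After expanding and cancelling, this reduces to
\[
4^q-2\cdot 2^q-3^q+1>0,
\]
which holds for $q\geq3$ (at $q=3$ the left side is $22$, and the left side increases thereafter).

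I expect the only delicate point to be the left bound, namely making the tail estimate with $(2/m)^i$ and the refined coefficient $3/m-1$ sharp enough. The crude bound $\sum_{m\geq4}8m^{-3}$ narrowly fails, so the factor $(m-3)/m$ must be retained. If needed, the positive $m=3$ term, with coefficient $F_3(q)-1>0$, can also be kept.
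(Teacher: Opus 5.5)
Your proposal is correct and follows the paper's proof essentially step for step. It uses the same termwise comparison, with the $m=2$ coefficient ratio $1/c_q$, for the right bridge, and the same expansion of $2M-G$ for the left bridge, using $F_m(q)\geq1$, monotonicity of $\Lambda_P$, and a tail bound over $m\geq4$; it also reduces \eqref{eq:key-product} to $(2^q-1)^2>3^q$ exactly as the paper does. Two side remarks are inaccurate but harmless: $\beta_q<1/c_q$ follows rigorously from $\beta_q=\frac{1}{2^q-1}\bigl(1+\frac{2^q}{3^q-1}\bigr)<\frac{2}{2^q-1}<\frac{1}{c_q}$ rather than from an order-of-magnitude comparison, and the tail $\sum_{m\geq4}(m-3)m^{-4}$ is about $0.018$, so even your ``crude'' bound $8\sum_{m\geq4}m^{-3}<8\int_3^\infty x^{-3}\,dx=\tfrac49<\tfrac12$ already suffices.
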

\begin{proof}
Let $P$ be an admissible prefix and $i\geq q$. For the right bridge,
$\beta_q=F_3(q)-1$. Thus, for $m\geq3$,
\[
 F_m(q)-1\geq\beta_q
 \geq\beta_q\left(1-\frac{F_m(q)}m\right).
\]
Recall that $c_q=2^{q-1}-1$. For $m=2$ the ratio of these two coefficients is
\[
 \frac{F_2(q)-1}{1-F_2(q)/2}=\frac1{c_q}>\beta_q.
\]
The last inequality follows on putting $x=2^q$, $y=3^q$ and noting
\[
 2(x-1)(y-1)-(x-2)(x+y-1)=x(y+1-x)>0.
\]
Summation  in~\eqref{eq:T-G} and~\eqref{eq:T-N} gives $N_{T,q}\geq\beta_qG_{T,q}$.

For the left bridge, as $F_m(q)\geq 1$, monotonicity of $\Lambda_P$ gives a direct estimate:
\[
 \begin{aligned}
 2M_{T,q}-G_{T,q}
 &=\sum_{m\geq2}\Lambda_P(m)m^{-i}
                   \left(\frac{3F_m(q)}m-1\right)\\
 &\geq\sum_{m\geq2}\Lambda_P(m)\frac{3-m}{m^{i+1}}\\
 &\geq\Lambda_P(2)
   \left(2^{-(i+1)}-\sum_{m\geq4}\frac{m-3}{m^{i+1}}\right)>0.
 \end{aligned}
\]
Indeed, for $i\geq3$,
\[
 \sum_{m\geq4}\frac{m-3}{m^{i+1}}
 <4^{-i}\left(1+\frac4{i-1}\right)<2^{-(i+1)}.
\]
The second strict inequality follows from
$1+4/(i-1)\leq3<2^{i-1}$ for $i\geq3$. This also covers the
empty prefix, for which $\Lambda_P\equiv1$.
The bridge estimates prove~\eqref{eq:T-lateral}.

Finally, $2c_q\beta_q>1$ is equivalent to
$(2^q-1)^2>3^q$. For $q\geq3$,
\[
 (2^q-1)^2\geq9\cdot4^{q-2}>9\cdot3^{q-2}=3^q,
\]
which proves~\eqref{eq:key-product}.
\end{proof}

\begin{prop}\label{prop:qge3}
For every integer $q\geq3$, one has $D_q+T_q=[A_q+1,+\infty)$.
\end{prop}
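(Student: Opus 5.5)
Since $D_q\subset[A_q,+\infty)$ and $T_q\subset[1,A_q]$, every element of $D_q+T_q$ is at least $A_q+1$, and $A_q+1\in D_q+T_q$ because $A_q\in D_q$ and $1\in T_q$. It remains to show that every $x>A_q+1$ lies in the sum. For such $x$ we set $K_x:=x-T_q$ and apply Proposition~\ref{prop:half-line}(b) with $E=D_q$ and $K=K_x$; then $D_q\cap K_x\neq\varnothing$ gives $x\in D_q+T_q$.

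First we check the hypotheses on $E$. The set $D_q$ is closed, perfect and nowhere dense, with $\conv(D_q)=[A_q,+\infty)$, and its gaps are uniformly bounded by $A_q-1$ (Lemma~\ref{lem:E-estimates}). The set $K_x$ is a compact Cantor set with $\conv(K_x)=[x-A_q,x-1]$. Since $x-1>A_q$, this hull meets $[A_q,+\infty)$. Moreover $\diam(K_x)=A_q-1$ is strictly larger than every gap of $D_q$, by the strict inequality $G_{D,q}<A_q-1$ in Lemma~\ref{lem:E-estimates}. Hence $K_x$ cannot lie inside a bounded gap of $D_q$.

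Next come the lateral thickness conditions. Reflection $y\mapsto x-y$ turns the natural presentation of $T_q$ into a presentation of $K_x$ and swaps left and right bridges. So $\tau_L(K_x)=\tau_R(T_q)\geq\beta_q$ and $\tau_R(K_x)=\tau_L(T_q)\geq\tfrac12$ by Lemma~\ref{lem:Tq-lateral}. Combining these with \eqref{eq:E-lateral}, we get
\[
\tau_R(D_q)\tau_L(K_x)\geq 2c_q\beta_q>1
\]
by \eqref{eq:key-product}, and
\[
\tau_L(D_q)\tau_R(K_x)\geq \frac{c_q}{2}>1
\]
because $c_q=2^{q-1}-1\geq3$ for $q\geq3$. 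Both inequalities required in Proposition~\ref{prop:half-line}(b) therefore hold, and the proposition gives $D_q\cap K_x\neq\varnothing$.

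The only delicate point is the orientation bookkeeping under reflection. One must pair $\tau_R(D_q)$ with $\tau_R(T_q)$, which is the weak one, of size $\beta_q$. It is exactly this pairing that \eqref{eq:key-product} was designed to handle. The other pairing has ample room.
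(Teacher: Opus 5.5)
Your proof is correct and follows essentially the same route as the paper's. You reflect $T_q$ to $K_x=x-T_q$ and swap the lateral thicknesses, then pair them with \eqref{eq:E-lateral} to verify Proposition~\ref{prop:half-line}(b). The remaining hypotheses come from the hull condition and $G_{D,q}<A_q-1=\diam(K_x)$, with the minimum $A_q+1$ attained.
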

\begin{proof}
Fix $x>A_q+1$ and set $K_x=x-T_q$.
We use the presentation induced by the natural presentation of $T_q$.
Reflection reverses the left and right presentation bridges, so
\[
 \tau_L(K_x)\geq\beta_q,\qquad \tau_R(K_x)\geq\frac12.
\]
Together with~\eqref{eq:E-lateral}, these bounds give
\[
 \tau_R(D_q)\tau_L(K_x)\geq2c_q\beta_q>1,\qquad
 \tau_L(D_q)\tau_R(K_x)\geq\frac{c_q}{2}\geq\frac32>1.
\]
The convex hull $[x-A_q,x-1]$ meets $[A_q,\infty)$.
Every gap of $D_q$ is shorter than $A_q-1=\diam(K_x)$, so $K_x$
cannot lie in such a gap. Proposition~\ref{prop:half-line}(b) gives
$D_q\cap K_x\ne\varnothing$, or $x\in D_q+T_q$.
The minimum $A_q+1$ is attained, completing the proof.
\end{proof}

\begin{proof}[Proof of Theorem~\ref{thm:main-positive}]
	Propositions~\ref{prop:q2} and~\ref{prop:qge3} establish the diagonal
	case for every $q\geq2$. For $2\leq p\leq q$, we have
	$T_q\subseteq T_p$, and hence
	\begin{equation}\label{eq:diagonal-reduction}
		[A_q+1,+\infty)=D_q+T_q
		\subseteq D_q+T_p\subseteq[A_q+1,+\infty),
	\end{equation}
	where the last inclusion follows from $\min D_q=A_q$ and
	$\min T_p=1$. This proves the theorem.
\end{proof}

\section{Further mixed arithmetic sums}\label{sec:mixed}

\subsection{Compact truncations of the upper-bounded sets}
The following lemma supplies the compact sets used in
Theorem~\ref{thm:mixed-upper}. Its proof also records why truncating
$D_q$ at the chosen endpoints does not shorten its Newhouse bridges.

\begin{lem}\label{lem:compact-truncation}
	For $q\geq2$ and $R\geq1$, put
	\[
	b_R=h_R=\zeta^\star(2,\{1\}^{R}),\qquad
	K_R^{(q)}=D_q\cap[A_q,b_R].
	\]
	Then $K_R^{(q)}$ is a compact Cantor set with convex hull $[A_q,b_R]$,
	and
	\begin{equation}\label{eq:truncation-estimates}
		K_R^{(q)}\uparrow D_q,\qquad
		\maxgap(K_R^{(q)})<A_q-1,\qquad
		t(K_R^{(q)})\geq c_q=2^{q-1}-1.
	\end{equation}
\end{lem}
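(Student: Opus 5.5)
The plan rests on one observation: $b_R=h_R=\eta(2,\{1\}^{R-1},2,\{1\}^{\infty})=U_{R-1}$ is the left endpoint of the spine gap $(U_{R-1},V_{R-1,q})$ from \eqref{eq:spine-gaps}. So $b_R\in D_q$, and the interval $[A_q,b_R]$ is cut out by the natural subdivision itself.

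\emph{Step 1: $[A_q,b_R]$ is a union of retained intervals.} In the natural construction, the half-line $[A_q,+\infty)$ is split at the gaps $(U_r,V_{r,q})$, $r\ge0$, together with the earlier gaps at the empty prefix and at $P_r$ with larger split indices. Since $U_r=h_{r+1}\uparrow+\infty$, the interval $[A_q,U_{R-1}]$ is exactly the union of finitely many closed construction intervals. These are the ones lying to the left of the gap $\mathcal G_{D,q}(P_{R-1},1)$. Their right endpoint is $U_{R-1}$, because it is the right end of the left retained interval $[\eta(P_{R-1},\{q\}^\infty),\eta(P_{R-1},2,\{1\}^\infty)]$.

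\emph{Step 2: basic properties of $K_R^{(q)}$.} It follows that $K_R^{(q)}$ is the intersection of $D_q$ with a finite union of construction intervals. Hence it is compact, perfect and nowhere dense. Its convex hull is $[A_q,b_R]$, since both endpoints lie in $D_q$. Moreover, its gaps are exactly the natural gaps of $D_q$ inside $[A_q,b_R]$. The monotonicity $K_R^{(q)}\uparrow D_q$ follows from $b_R\uparrow\infty$. The bound $\maxgap<A_q-1$ is then the last inequality of Lemma~\ref{lem:E-estimates}.

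\emph{Step 3: thickness.} This is the main point, namely that truncation does not shorten Newhouse bridges. Take a gap $G$ of $K_R^{(q)}$, which is a natural gap of $D_q$. Each of its retained intervals is itself a construction interval inside $[A_q,b_R]$. The construction intervals are nested, and $[A_q,b_R]$ is a union of construction intervals at the level where it was cut. Therefore every construction interval adjacent to $G$ lies entirely in $[A_q,b_R]$.

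By \eqref{eq:E-bridges}, both retained intervals have length at least $c_q|G|$. The bridge comparison argument preceding \eqref{eq:E-Newhouse} then runs inside $K_R^{(q)}$ unchanged. Either the bridge contains the whole adjacent retained interval, or it is stopped by a descendant gap $G'$ with $|G'|\ge|G|$. In the second case, the retained interval of $G'$ facing $G$ has length at least $c_q|G|$ and lies inside the bridge.

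A bridge of $K_R^{(q)}$ can differ from the corresponding bridge of $D_q$ only by being cut off at $A_q$ or $b_R$. Any such cut happens beyond the adjacent retained interval, so the lower bound survives. Hence $t(K_R^{(q)})\ge c_q$.

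The one delicate point is checking in Step 1 that $b_R$ is a subdivision endpoint rather than an interior point of some retained interval. This follows from \eqref{eq:finite-endpoint}: $U_{R-1}=\eta(P_{R-1},2,\{1\}^\infty)$ is the left endpoint of $\mathcal G_{D,q}(P_{R-1},1)$ as given in \eqref{eq:Egaps}.
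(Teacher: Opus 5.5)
There is a genuine gap in Step~3. Your assertion that every retained interval adjacent to a gap of $K_R^{(q)}$ lies in $[A_q,b_R]$ is false. The finitely many construction intervals of Step~1 are separated by the spine gaps $\mathcal G_{D,q}(\emptyset,2)$ (when $q\geq3$) and $(U_r,V_{r,q})=\mathcal G_{D,q}(P_r,1)$, $0\leq r\leq R-2$. These are gaps of $K_R^{(q)}$ that are not descendants of any of those intervals. Their right retained intervals are the unbounded half-lines $[\eta(2,\{q\}^{\infty}),+\infty)$ and $[V_{r,q},+\infty)$.

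For these gaps the cut at $b_R$ does fall inside the adjacent retained interval, so the claim that ``the cut happens beyond the adjacent retained interval'' fails. For instance, if $R\geq2$, the right bridge of $(U_{R-2},V_{R-2,q})$ in $K_R^{(q)}$ lies in $[V_{R-2,q},U_{R-1}]$. That interval's length is only known to be at least $c_q\Delta_{R-1,q}$, so to get the needed $c_q\Delta_{R-2,q}$ you must compare the two spine gaps. The point you flagged as delicate, that $b_R$ is a subdivision endpoint, is routine. This one is the real issue, and it is exactly why the lemma stresses that truncation does not shorten bridges.

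The missing ingredient is a comparison of gap lengths. The paper proves that every gap of $K_R^{(q)}$ is shorter than $H_R=(b_R,V_{R-1,q})$. It uses \eqref{eq:increasing-gaps}, the bound $|\mathcal G_{D,q}(\emptyset,i)|\leq\sum_{m\geq2}(F_m(q)-1)/m^2<\Delta_{0,q}$ (which comes from $\Lambda_{P_0}(m)>1/m$), and Lemma~\ref{lem:prefix} for the remaining prefixes. Consequently the right Newhouse bridge in $D_q$ of any gap left of $b_R$ already stops at or before $b_R$, and truncation changes no bridge.

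Within your framework a lighter repair also works:
\begin{itemize}
\item For $(U_r,V_{r,q})$, run the stopping argument in $[V_{r,q},U_{r+1}]\subset[A_q,b_R]$, whose length is $M_{D,q}(P_{r+1},1)\geq c_q\Delta_{r+1,q}>c_q\Delta_{r,q}$.
\item For $\mathcal G_{D,q}(\emptyset,2)$, run it in $[\eta(2,\{q\}^{\infty}),U_0]$, whose length is at least $c_q\Delta_{0,q}>c_q|\mathcal G_{D,q}(\emptyset,2)|$.
\end{itemize}
Either way, the inequalities $\Delta_{r,q}<\Delta_{r+1,q}$ and $|\mathcal G_{D,q}(\emptyset,2)|<\Delta_{0,q}$ must be invoked; your argument never uses them.

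A smaller omission: $\maxgap$ is a supremum, so the strict bound $\maxgap(K_R^{(q)})<A_q-1$ also needs the remark that the largest gap of a compact Cantor set is attained.
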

\begin{proof}
	The point $b_R=U_{R-1}$ is the left endpoint of a natural gap of $D_q$.
	Since $D_q$ is perfect, $b_R$ is accumulated from the left. Hence the
	closed, nowhere dense set $K_R^{(q)}$ remains perfect. Both endpoints
	belong to it. Also $b_R$ increases to infinity by~\eqref{eq:height-one},
	so these compact sets exhaust $D_q$. Their gaps are exactly the natural
	gaps of $D_q$ contained in $[A_q,b_R]$. Each has length less than
	$A_q-1$ by Lemma~\ref{lem:E-estimates}. The largest gap length of a
	compact Cantor set is attained (only finitely many gaps can exceed any
	fixed positive length), proving the strict maximal-gap bound.
	
	Let $H_R=(b_R,V_{R-1,q})$, so $|H_R|=\Delta_{R-1,q}$. We claim that
	every gap $G$ of $K_R^{(q)}$ is shorter than $H_R$. If its prefix is empty, then
	\[
	|G|\leq\sum_{m\geq2}\frac{F_m(q)-1}{m^2}<\Delta_{0,q},
	\]
	since $\Lambda_{P_0}(m)>1/m$. For a nonempty prefix other than a $P_r$,
	Lemma~\ref{lem:prefix} gives
	\[
	|G|\leq\sum_{m\geq2}\frac{F_m(q)-1}{m^2(m-1)}<\Delta_{0,q}.
	\]
	If the prefix is $P_r$, the order of the correspondence forces
	$r\leq R-1$: a prefix $P_r$ with $r\geq R$ lies entirely to the right
	of $b_R=\eta(2,\{1\}^{R-1},2,\{1\}^{\infty})$. For $r<R-1$,
	\[
	|G|\leq\Delta_{r,q}<\Delta_{R-1,q}.
	\]
	When $r=R-1$, the split $i=1$ is $H_R$ itself and is excluded; every
	split $i\geq2$ is strictly shorter than $H_R$. This proves the claim.
	
	The left Newhouse bridge of a gap of $K_R^{(q)}$ is unchanged by the
	truncation. Its right bridge cannot cross $b_R$, because $H_R$ is
	longer than that gap. Thus the right bridge is also unchanged.
	The lower bound~\eqref{eq:E-Newhouse} therefore holds in $K_R^{(q)}$,
	and taking the infimum proves the thickness estimate.
\end{proof}

\subsection{Mixed upper-bounded sets}
\begin{proof}[Proof of Theorem~\ref{thm:mixed-upper}]
Use the compact sets $K_R^{(q_1)}$ and $K_R^{(q_2)}$ from
Lemma~\ref{lem:compact-truncation}. Their thicknesses satisfy
\[
 t(K_R^{(q_1)})t(K_R^{(q_2)})\geq c_{q_1}c_{q_2}\geq1.
\]
For $s\geq2$ and $R\geq2$, their gap and diameter bounds are
\[
 \maxgap(K_R^{(s)})<A_s-1\leq1,
 \qquad \diam(K_R^{(s)})=b_R-A_s>1,
\]
since $A_s\leq2$ and $b_R\geq3\zeta(4)>3$.
Theorem~\ref{thm:newhouse} therefore gives
\[
 K_R^{(q_1)}+K_R^{(q_2)}=[A_{q_1}+A_{q_2},2b_R].
\]
As the truncations increase to the full sets and $b_R\to+\infty$,
taking their union proves the claimed half-line formula.
\end{proof}

\subsection{Mixed lower-bounded sets}
For $p\geq2$, put
\[
 B_p:=\eta(p+1,\{p\}^{\infty}).
\]
The initial subdivision gives
\begin{equation}\label{eq:Tp-initial-split}
 T_p\subset[1,B_p]\cup[\zeta(p),A_p],
\end{equation}
with the natural gap $(B_p,\zeta(p))$. We need the following quantitative
separation of its left retained interval from its gap.

\begin{lem}\label{lem:Bp-gap}
For every integer $p\geq2$, one has $2B_p<1+\zeta(p)$.
\end{lem}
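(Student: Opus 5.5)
We rewrite the claim in terms of the lengths of the initial subdivision of $T_p$ and then estimate a single explicit series. At the initial level $P=\emptyset$, $i=p$, we have $\zeta^\star(\emptyset)=1$, and the left retained interval and the gap are $[1,B_p]$ and $(B_p,\zeta(p))$. The inequality $2B_p<1+\zeta(p)$ is therefore equivalent to $M_{T,p}(\emptyset,p)<G_{T,p}(\emptyset,p)$. Since $\Lambda_\emptyset\equiv1$, formulas~\eqref{eq:T-M} and~\eqref{eq:T-G} give
\[
 G_{T,p}(\emptyset,p)-M_{T,p}(\emptyset,p)
 =\sum_{m\geq2}m^{-p}\Bigl(1-\frac{2F_m(p)}{m}\Bigr),
\]
and the task is to show that this series is positive.

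\emph{The case $p=2$.} Here the computation is exact: Lemma~\ref{lem:T2-local} gives $B_2=2\zeta(2)-2$. The claim becomes $4\zeta(2)-4<1+\zeta(2)$, that is, $\zeta(2)<5/3$, which was already used there.

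\emph{The general term for $p\geq3$.} Only the $m=2$ term is negative. Using $F_2(p)=2^p/(2^p-1)$, it equals $-2^{-p}/(2^p-1)$. For $m\geq3$ we bound $F_m(p)\leq A_p$. By Lemma~\ref{lem:product}, $A_p<1+1/c_p$, so $A_p<4/3$ for $p\geq3$. Hence each term with $m\geq3$ is at least $m^{-p}(1-2A_p/m)>0$. It then suffices to prove
\[
 \sum_{m\geq3}m^{-p}\Bigl(1-\frac{2A_p}{m}\Bigr)>\frac{2^{-p}}{2^p-1}.
\]

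\emph{The case $p\geq4$.} Keep only the $m=3$ term, with $A_p\leq A_4<1+1/7$. This gives the left side at least $3^{-p}(1-16/21)=\tfrac{5}{21}3^{-p}$. The right side is at most $\tfrac{16}{15}4^{-p}$. The desired inequality then reduces to $(4/3)^p>448/100$, which holds for $p\geq6$.

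\emph{The small cases $p=3,4,5$.} I expect these to be the main obstacle. The crude bound $F_m\leq A_p$ loses too much: for $p=3$, the $m=3$ term alone with $A_3<4/3$ does not beat $1/56$. For these exponents I would instead compute exactly the first few terms, using the exact values $F_3(p)$ and $F_4(p)$, and then bound the remainder by $F_m(p)\leq A_p$ together with an integral tail estimate. For $p=3$ this means using $F_3(3)=\tfrac{8}{7}\cdot\tfrac{27}{26}$ in place of $A_3$. The term $m=3$ then contributes about $0.0088$, and the terms $m\geq4$ add roughly $0.006+0.004+\cdots$. Together these should exceed $1/56\approx0.0179$, though the margin is modest, so the tail must be summed carefully rather than truncated early.
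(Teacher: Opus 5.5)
Your reduction is the paper's: your series is exactly twice the paper's $\sum_{m\geq2}m^{-p}\bigl(\tfrac12-F_m(p)/m\bigr)$. The case $p=2$ and the sign analysis are also identical.

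The difference is the bound on $F_m(p)$ for $m\geq3$. You use the $m$-independent bound $F_m(p)\leq A_p<1+1/c_p$. The paper uses the sharper $m$-dependent bound $F_m(p)\leq (c_p+1)m/(c_pm+1)$ from Lemma~\ref{lem:product}, which produces explicit rational terms $d_{m,p}$. The paper then does a finite check with $m=3,\dots,6$ at $p=3$, and a monotonicity-in-$p$ argument covering all $p\geq4$ with only $m=3,4$. Your cruder bound makes $p\geq6$ a one-line power comparison, but you must then treat $p=3,4,5$ separately.

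As written, those small cases are only a plan. The one number you give is wrong. The value $0.0088$ is $5/567$, which comes from using $F_2(3)=8/7$; since $F_3(3)>F_2(3)$, that is not a lower bound. With $F_3(3)=108/91$, the $m=3$ term is $19/2457\approx0.00773$. The plan still closes, and more easily than you expect.

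\emph{Case $p\geq4$.} Keep the terms $m=3,4$ with $A_p\leq A_4<8/7$ and multiply by $4^p$. The requirement becomes
\[
\tfrac{5}{21}(4/3)^p+\tfrac37>\frac{2^p}{2^p-1}.
\]
The left side increases and the right side decreases in $p$. At $p=4$ the left side is about $1.18$, which exceeds $16/15$. So $p=4,5$ need no exact values of $F_m$.

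\emph{Case $p=3$.} Every term with $m\geq3$ is positive, so you may truncate; no integral tail estimate is needed. The crude bound $A_3<4/3$ alone gives
\[
\tfrac1{243}+\tfrac1{192}+\tfrac7{1875}+\tfrac5{1944}+\tfrac{13}{7203}+\tfrac1{768}\approx0.01874>\tfrac1{56}.
\]
Alternatively, the exact terms $19/2457+253/40768$ plus the crude $m=5,6$ terms give about $0.0202$. With either check written out, your proof is complete.
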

\begin{proof}
For $p=2$, the identity $F_m(2)=2m/(m+1)$ gives
\[B_2=2\sum_{m\geq1}\frac{1}{m^2(m+1)}=2\zeta(2)-2.\]
The assertion follows from $\zeta(2)=\pi^2/6<5/3$.

Let $p\geq3$ and $c_p=2^{p-1}-1$. By
\eqref{eq:tailformula} and \eqref{eq:product-bound},
\begin{equation}\label{eq:Bp-difference}
 \begin{aligned}
 \frac{1+\zeta(p)}2-B_p
 &=\sum_{m\geq2}\frac1{m^p}
   \left(\frac12-\frac{F_m(p)}m\right)
 \geq\sum_{m\geq2}d_{m,p},\\
 d_{m,p}&:=\frac{c_p(m-2)-1}{2m^p(c_pm+1)}.
 \end{aligned}
\end{equation}
All these series converge absolutely. The only negative term is
$d_{2,p}=-1/[2^{p+1}(2^p-1)]$. All terms with $m\geq3$ are positive.

For $p=3$, direct calculation gives
\[
 \begin{aligned}
 \sum_{m=3}^{6}d_{m,3}
 &=\frac1{270}+\frac5{1664}+\frac1{500}+\frac{11}{8208}\\
 &>\frac{109}{12000}>\frac1{112}=|d_{2,3}|.
 \end{aligned}
\]
For $p\geq4$, only the terms $m=3,4$ are needed. Indeed, for either
of these values of $m$,
\[
 \frac{d_{m,p}}{|d_{2,p}|}
 =\left[\left(\frac4m\right)^p-\left(\frac2m\right)^p\right]
   \frac{c_p(m-2)-1}{c_pm+1}
\]
is increasing in $p$: both factors are positive and increasing,
since $c_p$ increases and
\[
 \frac{d}{dc}\left(\frac{c(m-2)-1}{cm+1}\right)
 =\frac{2(m-1)}{(cm+1)^2}>0.
\]
Consequently, comparison with $p=4$ yields
\[
 \begin{aligned}
 \frac{d_{3,p}+d_{4,p}}{|d_{2,p}|}
 &\geq480\left(\frac1{594}+\frac{13}{14848}\right)\\
 &>480\left(\frac1{600}+\frac1{1200}\right)=\frac65>1.
 \end{aligned}
\]
Thus $\sum_{m\geq2}d_{m,p}>0$ for every $p\geq3$, and
\eqref{eq:Bp-difference} proves $2B_p<1+\zeta(p)$.
\end{proof}

\begin{proof}[Proof of Theorem~\ref{thm:mixed-lower}]
By symmetry, assume $p_1\leq p_2$. Then $T_{p_2}\subseteq T_{p_1}$.
From~\eqref{eq:Tp-initial-split}, a sum of two points of $T_{p_1}$
is either at most $2B_{p_1}$ or at least $1+\zeta(p_1)$. Therefore
\[
 (2B_{p_1},\,1+\zeta(p_1))\cap(T_{p_1}+T_{p_2})=\varnothing.
\]
The interval is nonempty by Lemma~\ref{lem:Bp-gap}. On the other hand,
$2=1+1$ and $1+\zeta(p_1)$ belong to $T_{p_1}+T_{p_2}$, using the
finite value $\zeta(p_1)\in T_{p_1}$ from~\eqref{eq:Tp-closure}.
Since $B_{p_1}>1$, these two points lie on opposite sides of the
omitted open interval. The mixed sum is therefore not an interval.
\end{proof}

\begin{rem}
The last two theorems can also be approached through the Hall-type
subdivision arguments of~\cite{LY}. Here the common gap and truncation
estimates make their proofs short consequences of the preceding analysis.
The same combination of local and lateral comparisons may be useful for
other arithmetic problems involving Cantor-type sets.
\end{rem}

\section*{Acknowledgements}
This research project is funded by the National Natural Science Foundation of China (Grant No. 12571009) and the
Natural Science Foundation of Hunan Province, China (Grant No. 2026JJ40003).

\end{document}